\renewcommand{\bf}{\boldsymbol{f}}
\newcommand{\bu}{\boldsymbol{u}}
\newcommand{\bv}{\boldsymbol{v}}
\newcommand{\bw}{\boldsymbol{w}}
\newcommand{\bI}{\boldsymbol{I}}
\newcommand{\bK}{\boldsymbol{K}}
\newcommand{\cN}{\mathcal{N}}
\newcommand{\cT}{\mathcal{T}}
\newcommand{\cV}{\mathcal{V}}
\renewcommand{\div}{\operatorname{div}}
\numberwithin{theorem}{section}
\crefname{assumption}{Assumption}{Assumptions}
\crefname{remark}{Remark}{Remarks}
\crefname{example}{Example}{Examples}
\title{Parallel multilevel methods\\ for solving the Darcy--Forchheimer model\\ based on a nearly semicoercive formulation\thanks{This work was conducted while the second author was a visiting professor at KAUST, hosted by the Computer, Electrical and Mathematical Sciences and Engineering Division.}}
\author{
Jongho Park\thanks{Applied Mathematics and Computational Sciences Program, Computer, Electrical and Mathematical Sciences and Engineering Division, King Abdullah University of Science and Technology~(KAUST), Thuwal 23955, Saudi Arabia
(\email{jongho.park@kaust.edu.sa}).}
\and
S.\ Majid Hassanizadeh\thanks{Department of Earth Sciences, Utrecht University, 3584, CB, Utrecht, the Netherlands 
(\email{S.M.Hassanizadeh@uu.nl}).}
}
\begin{document}

%%%%%%%%%%%%%%%

\maketitle
\begin{abstract}
High-velocity fluid flow through porous media is modeled by prescribing a nonlinear relationship between the flow rate and the pressure gradient, called the Darcy--Forchheimer equation.
This paper is concerned with the analysis of parallel multilevel methods for solving the Darcy--Forchheimer model.
We begin by reformulating the Darcy--Forchheimer model as a nearly semicoercive convex optimization problem via the augmented Lagrangian method.
Building on this formulation, we develop a parallel multilevel method, also known as a multilevel additive Schwarz method, within the framework of subspace correction for nearly semicoercive convex problems, yielding a theoretically supported and computationally efficient solver for the Darcy--Forchheimer model.
The convergence analysis establishes robustness with respect to the augmented Lagrangian parameter $\epsilon$.
To further enhance convergence, we incorporate a backtracking line search and a full approximation scheme.
Numerical results support the theoretical findings and demonstrate the effectiveness of the proposed approach.
\end{abstract}

% Keywords
\begin{keywords}
Darcy--Forchheimer model, Multilevel methods, Subspace correction methods, Nearly semicoercive problems, Augmented Lagrangian method
\end{keywords}

% AMS classification
\begin{AMS}
65N55,   % Multigrid methods; domain decomposition for boundary value problems involving PDEs
65N20,   % Numerical methods for ill-posed problems for boundary value problems involving PDEs
76S05,   % Flows in porous media; filtration; seepage
90C25   % Convex programming
\end{AMS}

% Section: Introduction
\section{Introduction}
\label{Sec:Introduction}
% Darcy--Forchheimer model
The Darcy--Forchheimer model characterizes a nonlinear relationship between flow rate and pressure gradient in porous media, and arises in a variety of important applications~(see, e.g.,~\cite{SMH:2023}).
A common situation relates to groundwater flow in high-permeability zones, such as karstic aquifers~(see~\cite{MW:2021}).
Also, the flow velocity is high around wellbores in oil and gas reservoirs, in geothermal reservoirs, or in CO$_2$ sequestration systems, where fluids are extracted or injected~(cf.~\cite{CDQHLQXL:2023}).
Another example is in gas reservoirs where high compressibility and velocity make inertial effects important~\cite{MDG:2015,YLQ:2015}.
In some groundwater contamination systems, such as reactive barriers and funnel and gate systems, high groundwater velocity is encountered~\cite{GGSJH:2000}.
Finally, in some manufactured porous media, such as porous catalysts or packed columns, high-speed gas or liquid flows lead to non-Darcy behavior~\cite{HRD:2012,ZZ:1988}.
Various studies have shown that standard groundwater models based on the Darcy model fail to predict flow fields in the systems described above.
Often, they underestimate velocity by orders of magnitude for a given pressure distribution; see, e.g.,~\cite{Kuniansky:2016}.

% Numerical methods for the Darcy--Forchheimer model
Due to its significance, substantial research has been dedicated to developing numerical methods for the Darcy--Forchheimer model.
Regarding numerical discretizations, many studies have focused on finite element approaches, which are often classified based on the choice of weak formulation.
In~\cite{PR:2012,Park:2005}, mixed finite element methods were developed using a weak formulation with the primal space $H(\div; \Omega) \cap L^3(\Omega)^d$ and the dual space $L^2(\Omega)$, where $d$ denotes the spatial dimension of the domain $\Omega$.
In contrast, studies such as~\cite{GW:2008,LMS:2009,SLM:2013} adopted a weak formulation with the primal space $L^3(\Omega)^d$ and the dual space $W^{1, \frac{3}{2}}(\Omega)$.
For the corresponding discretization, several numerical solvers have been proposed.
A Peaceman--Rachford-type iterative method was proposed in~\cite{GR:2008}, while a nonlinear multigrid method for the coupled nonlinear system was developed in~\cite{HCR:2018}.
Furthermore, a monolithic multigrid method for coupled problems on fractured domains was introduced in~\cite{AGPR:2019}.
In~\cite{FA:2020}, a two-level discretization based on a mixed formulation was investigated to reduce the computational cost associated with the nonlinear component.
More recently, transformed primal-dual methods with variable preconditioners were studied in~\cite{CGW:2023}.

% Multilevel methods
The aim of this paper is to develop efficient numerical methods for the Darcy--Forchheimer model.
In particular, we adopt a multilevel approach, building on the well-established philosophy of leveraging hierarchical solvers for nonlinear systems.
Multilevel methods are among the most powerful tools for designing numerical solvers in scientific computing and have been extensively studied.
Representative works include Bramble--Pasciak--Xu preconditioners~\cite{BPX:1990} and multilevel Schwarz methods~\cite{DSW:1996,Zhang:1992,Zhang:1994}, in which rigorous convergence theories for multilevel methods were developed.
Multilevel methods have also been successfully applied to linear problems involving vector fields; see, e.g.,~\cite{AFW:2000,HT:2000}.
In addition to the aforementioned studies~\cite{AGPR:2019,HCR:2018}, multilevel techniques have been successfully applied to a wide range of nonlinear problems.
The design and analysis of multilevel methods for nonlinear problems remain active areas of research~(see, e.g.,~\cite{BF:2024}).

% Theoretically supported multilevel methods
We note that a number of multilevel methods have been proposed for the Darcy--Forchheimer model, or more generally for nonlinear saddle point problems. 
Although these methods demonstrate excellent numerical performance in practice, theoretical guarantees for fast global convergence remain limited.
For example, Newton--Krylov--Schwarz~\cite{CGKMY:1998,CGKT:1994} and nonlinear Schwarz preconditioning~\cite{CK:2002,CKM:2002,DGKKM:2016}, originally designed for general nonlinear problems, also perform well for the Darcy--Forchheimer model. 
However, since the nonlinearity is treated by Newton iterations, only fast local convergence can typically be guaranteed; see, e.g.,~\cite{BV:2004}.

Some multigrid methods tailored to the Darcy--Forchheimer model were considered in~\cite{AGPR:2019,HCR:2018}. 
These works adopt a monolithic multigrid strategy that applies the multilevel approach directly to the nonlinear saddle point formulation, making convergence analysis difficult due to the complexity of the problem.
We also note related multigrid methods for other nonlinear saddle point problems, such as the Cahn--Hilliard-type equations~\cite{AKW:2013,Wise:2010}, where convergence analysis is also unavailable.

% In this paper...
Motivated by the above observations, we propose multilevel methods for the Darcy--Forchheimer model whose numerical performance is supported by mathematical analysis.
The main idea is to reformulate the Darcy--Forchheimer model as a convex optimization problem using the augmented Lagrangian method~\cite{Hestenes:1969,Powell:1969}.
While this strategy was originally proposed for linear problems~\cite{LWC:2009,LWXZ:2007}, we extend it here to the nonlinear setting.
Once the model is cast into a convex optimization framework, we design parallel multilevel solvers grounded in the theory of subspace correction methods for convex optimization~\cite{Park:2020,Park:2022a,TX:2002}, which have been proven effective for a broad class of nonlinear partial differential equations~\cite{LP:2025a,Park:2024a}.
In particular, the resulting convex optimization problem is nearly semicoercive, i.e., it becomes ill-conditioned as the augmented Lagrangian parameter $\epsilon$ tends to zero.
To address this challenge, we adopt the recently proposed framework of subspace correction methods tailored to nearly semicoercive convex optimization problems~\cite{LP:2025b}.
In summary, although the individual ingredients---such as the augmented Lagrangian method, nearly semicoercive convex optimization, and the multilevel subspace correction---are well known, the novelty lies in combining these tools into an analyzable and practically efficient solver for the Darcy--Forchheimer model.

% Contributions
The main contributions are as follows:
\begin{itemize}
\item We show that the coupled nonlinear Darcy--Forchheimer model can be reformulated as a nearly semicoercive convex optimization problem via the augmented Lagrangian approach.
\item Based on this formulation, we design theoretically supported and computationally efficient parallel multilevel methods for the Darcy--Forchheimer model by combining nearly semicoercive subspace correction theory with a multilevel subspace decomposition.
\end{itemize}

% Paper organization
The remainder of this paper is organized as follows.
In \cref{Sec:Model}, we review finite element discretizations for the Darcy--Forchheimer model.
In \cref{Sec:Reduction}, we present the reformulation of the Darcy--Forchheimer model as a nearly semicoercive convex optimization problem.
In \cref{Sec:Multilevel}, we propose and analyze relevant parallel multilevel methods.
In \cref{Sec:Numerical}, we report numerical results that demonstrate the performance of the proposed methods.
Finally, in \cref{Sec:Conclusion}, we provide a summary and conclusions.

% Section: The Darcy--Forchheimer model
\section{The Darcy--Forchheimer model}
\label{Sec:Model}
In this section, we briefly review the mathematical formulation of the Darcy--Forchheimer model and its finite element discretization.
Relevant references include~\cite{GW:2008,HCR:2018,PR:2012}.

% Subsection: The continuous problem
\subsection{The continuous problem}
Let $ \Omega \subset \mathbb{R}^d $ be a bounded polyhedral domain with $ d = 2,3 $.
The steady-state Darcy--Forchheimer model is governed by the following equations.
The nonlinear relationship between the Darcy velocity $\bu$ and pressure $p$~\cite{HG:1987} is given by
\begin{subequations}
\label{Forchheimer}
\begin{equation}
\label{Forchheimer_relationship}
\frac{\mu}{\rho} \bK^{-1} \bu + \frac{\beta}{\rho} | \bu | \bu + \nabla p = \bf \quad \text{in } \Omega,
\end{equation}
where $\bK$ is the permeability tensor, $\rho$ is the fluid density, $\mu$ is the fluid viscosity, $\beta$ is the Forchheimer coefficient, and $\bf$ is the external body force per unit volume.

The conservation of mass is expressed as
\begin{equation}
\div \bu = g \quad \text{ in } \Omega,
\end{equation}
where $g$ is a prescribed source or sink term representing mass production or depletion.

% For the boundary conditions, the Neumann boundary condition is imposed as
% \begin{equation*}
% \bu \cdot \bn = g_N  \quad \text{on } \Gamma_N,
% \end{equation*}
% where $\Gamma_N$ is a subset of the boundary $\partial \Omega$, and $g_N$ represents the normal flux of the velocity field on the boundary.
% On the remaining part of the boundary, $\Gamma_D = \partial \Omega \setminus \Gamma_N$, a Dirichlet condition is imposed for the pressure:
% \begin{equation*}
% p = p_D \quad \text{on } \Gamma_D.
% \end{equation*}

For clarity and simplicity, we impose the pure homogeneous Dirichlet boundary condition:
%that is, $\Gamma_D = \partial \Omega$ and $p_D = 0$.
%Under this assumption, the boundary condition reduces to
\begin{equation}
\label{Forchheimer_BC}
p = 0 \quad \text{on } \partial \Omega.
\end{equation}
\end{subequations}
% Neumann case: To ensure well-posedness in this setting, the following compatibility condition must be satisfied, which follows directly from the divergence theorem:
% \begin{equation*}
% \int_{\Omega} g \, dx = 0.
% \end{equation*}
% Furthermore, since the constitutive relation~\eqref{Forchheimer_relationship} is invariant under additive constants in the pressure $p$, we impose the zero-mean condition to guarantee uniqueness:
% \begin{equation}
% \label{zero_mean}
% \int_{\Omega} p \, dx = 0.
% \end{equation}

% Subsection: Finite element approximations
\subsection{Finite element approximations}
Various finite element approximations have been proposed in the literature for approximating the Darcy--Forchheimer model~\cite{GW:2008,LMS:2009,PR:2012,Park:2005,SLM:2013}; see also~\cite{HCR:2018} and the references therein.  
Among these, we adopt the approach introduced in~\cite{PR:2012}, which employs Raviart--Thomas-type finite element discretizations~\cite{BBF:2013}.  

We define the function spaces  
\begin{align*}
X &= \left\{ \bv \in L^3(\Omega)^d : \div \bv \in L^2(\Omega) \right\}, \\
M &= L^2(\Omega).
\end{align*}
% Neumann case: where $L_0^2(\Omega)$ denotes the subspace of $L^2(\Omega)$ consisting of functions satisfying the zero-mean condition~\eqref{zero_mean}:
% \begin{equation*}
% L_0^2(\Omega) = \left\{ q \in L^2(\Omega) : \int_{\Omega} q \, dx = 0 \right\}.
% \end{equation*}
Observe that $X$ is the intersection of two Banach spaces, $H (\div; \Omega)$ and $L^3(\Omega)^d$:
\begin{equation*}
X = H (\div; \Omega) \cap L^3(\Omega)^d.
\end{equation*}
Equipped with the norm
\begin{equation*}
\| \bv \|_X = \left( \| \div \bv \|_{L^2(\Omega)}^2 + \| \bv \|_{L^2(\Omega)}^2 + \| \bv \|_{L^3(\Omega)}^2 \right)^{1/2},
\end{equation*}
the space $X$ becomes a uniformly smooth and uniformly convex Banach space (cf.~\cite{Megginson:1998}).  
% We also note that $X$ inherently incorporates the homogeneous Neumann boundary condition~\eqref{Forchheimer_BC}.

The weak formulation of~\eqref{Forchheimer} defined on $(X, M)$ is as follows: find $ ( \bu, p) \in X \times M $ such that  
\begin{equation}
\label{weak}
\begin{split}
\frac{\mu}{\rho} \int_{\Omega} \bK^{-1} \bu \cdot \bv \,dx + \frac{\beta}{\rho} \int_{\Omega} | \bu | \bu \cdot \bv \,dx - \int_{\Omega} p \div \bv \,dx = \int_{\Omega} \bf \cdot \bv \,dx
&\quad \forall \bv \in X, \\
\int_{\Omega} q \div \bu \,dx = \int_{\Omega} gq \,dx 
&\quad \forall q \in M.
\end{split}
\end{equation}

% % Figure: Lowest-order Raviart--Thomas
% \begin{figure}
%     \centering
%     \includegraphics[width=\textwidth]{RT.png}
%     \caption{Reference elements for the lowest-order Raviart--Thomas finite element in two and three dimensions.
%     Arrows indicate the degrees of freedom associated with the normal components on the element edges/faces.}
%     \label{Fig:RT}
% \end{figure}

Let $\cT_h$ be a quasi-uniform partition of $\Omega$ into finite elements, where $h$ denotes the characteristic mesh size.
Let $X_h \times M_h \subset X \times M$ be a Raviart--Thomas-type conforming mixed finite element space~\cite{BBF:2013} defined on $\cT_h$.
For example, in two dimensions ($d = 2$), if each element $T \in \cT_h$ is a rectangle, the local spaces $X_h (T)$ and $M_h (T)$ are defined as
\begin{equation}
\label{RT_mixed}
X_h (T) := Q_{k+1, k} (T) \oplus Q_{k, k+1} (T),
\quad
M_h (T) := Q_{k, k} (T),
\end{equation}
for $k \geq 0$, where \(Q_{k,\ell}(T)\) denotes the space of polynomials on \(T\) of degree at most \(k\) in the first coordinate and at most \(\ell\) in the second coordinate.
% Then the corresponding global spaces $X_h$ and $M_h$ are given by
% \begin{align*}
% X_h &= \left\{ \bv \in X : \bv|_T \in X_h (T) \text{ for all } T \in \mathcal{T}_h \right\}, \\
% M_h &= \left\{ q \in M : q|_T \in M_h (T) \text{ for all } T \in \mathcal{T}_h \right\}.
% \end{align*}
Additional examples of Raviart--Thomas-type conforming mixed finite element spaces can be found in~\cite[Table~1]{PR:2012}.
% The reference elements for the lowest-order Raviart--Thomas finite element in two and three dimensions are illustrated in \cref{Fig:RT}. 

The finite element formulation of~\eqref{weak} using the discrete space $X_h \times M_h$ is given as follows: find $(\bu_h, p_h) \in X_h \times M_h$ such that
\begin{equation}
\label{mixed_FEM}
\begin{split}
\frac{\mu}{\rho} \int_{\Omega} \bK^{-1} \bu_h \cdot \bv \,dx 
+ \frac{\beta}{\rho} \int_{\Omega} | \bu_h | \bu_h \cdot \bv \,dx 
- \int_{\Omega} p_h \div \bv \,dx 
= \int_{\Omega} \bf \cdot \bv \,dx \quad &\forall \bv \in X_h, \\
\int_{\Omega} q \div \bu_h \,dx 
= \int_{\Omega} g q \,dx \quad &\forall q \in M_h.
\end{split}
\end{equation}
Existence and uniqueness results for~\eqref{mixed_FEM} can be found in~\cite[Theorem~3.5]{PR:2012}.  
% In what follows, we refer to $\bu_h$ as the \emph{primal solution} and $p_h$ as the \emph{dual solution}.

We define the functional $F \colon X \rightarrow \mathbb{R}$ by
\begin{equation}
\label{F}
F(\bv) = \frac{\mu}{2 \rho} \int_{\Omega} \bK^{-1} \bv \cdot \bv \,dx + \frac{\beta}{3\rho} \int_{\Omega} | \bv |^3 \,dx - \int_{\Omega} \bf \cdot \bv \,dx,
\quad \bv \in X.
\end{equation}
One readily observes that~\eqref{mixed_FEM} corresponds to the optimality condition of the following saddle-point problem:
\begin{equation}
\label{mixed_FEM_saddle}
\min_{\bv \in X_h} \max_{q \in M_h}
\left\{ F(\bv) - \int_{\Omega} q (\div \bv - g ) \,dx \right\}.
\end{equation}
By invoking Fenchel--Rockafellar duality~\cite{JPX:2025,Rockafellar:1970}, the saddle-point problem~\eqref{mixed_FEM_saddle} is equivalent to a constrained convex optimization problem.  
Namely,~\eqref{mixed_FEM} can be reformulated as a constrained minimization problem, as summarized in \cref{Prop:equiv_opt}.

% Proposition: Equivalence to convex
\begin{proposition}
\label{Prop:equiv_opt}
The function $\bu_h \in X_h$ is a primal solution of~\eqref{mixed_FEM} if and only if it solves the following convex optimization problem with a linear constraint:
\begin{equation}
\label{mixed_FEM_opt}
\min_{\bv \in X_h} F(\bv) \quad
\text{ subject to } \quad
\div \bv = g_h,
\end{equation}
where the functional $F$ is defined in~\eqref{F}, and $g_h$ is the $L^2 (\Omega)$-orthogonal projection of $g$ onto $M_h$.
\end{proposition}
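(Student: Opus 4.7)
The plan is to derive \eqref{mixed_FEM_opt} from \eqref{mixed_FEM_saddle} in two independent steps: first identify \eqref{mixed_FEM} as the first-order optimality system for \eqref{mixed_FEM_saddle}, and then collapse the inner maximum over $M_h$ into an explicit linear equality constraint.

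For the first step, I would begin by verifying that $F$ is convex and Fr\'echet differentiable on $X_h$. Convexity of the quadratic part follows from the symmetric positive-definiteness of $\bK^{-1}$, and convexity of the cubic part from the convexity of $\bv \mapsto |\bv|^3$. A direct computation gives
\[
\langle F'(\bu_h), \bv \rangle = \frac{\mu}{\rho} \int_\Omega \bK^{-1} \bu_h \cdot \bv \, dx + \frac{\beta}{\rho} \int_\Omega |\bu_h| \bu_h \cdot \bv \, dx - \int_\Omega \bf \cdot \bv \, dx,
\]
so the stationarity of the Lagrangian $F(\bv) - \int_\Omega q (\div \bv - g)\,dx$ with respect to $\bv$ at $(\bu_h, p_h)$ reproduces the first equation of \eqref{mixed_FEM}, while stationarity with respect to $q$ reproduces the second. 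Because the Lagrangian is convex in $\bv$ and linear (hence concave) in $q$, $\bu_h$ solves \eqref{mixed_FEM} if and only if $(\bu_h, p_h)$ is a saddle point of \eqref{mixed_FEM_saddle} for some $p_h \in M_h$.

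For the second step, I would compute the inner supremum in \eqref{mixed_FEM_saddle} pointwise in $\bv$. The key structural property is the Raviart--Thomas inclusion $\div X_h \subseteq M_h$, so that $\div \bv - g_h \in M_h$ for every $\bv \in X_h$; moreover, because $g_h$ is the $L^2(\Omega)$-projection of $g$ onto $M_h$, one has
\[
\int_\Omega q (\div \bv - g) \, dx = \int_\Omega q (\div \bv - g_h) \, dx \quad \forall\, q \in M_h.
\]
The supremum over $q \in M_h$ of $-\int_\Omega q(\div \bv - g_h)\,dx$ is therefore $0$ when $\div \bv = g_h$ and $+\infty$ otherwise, which rewrites \eqref{mixed_FEM_saddle} exactly as \eqref{mixed_FEM_opt}. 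Chaining this with the first step yields the claimed equivalence.

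The main conceptual obstacle, and the reason the formulation uses $g_h$ rather than $g$, is the discretization gap between the constraint $\div \bv = g$ and the space $M_h$: since $\div X_h \subseteq M_h$, only the $M_h$-component of $g$ can be matched at the discrete level, and $g_h$ is precisely that component. Once this observation is in place, the argument is a routine application of Fenchel--Rockafellar duality to a finite-dimensional convex problem with a linear equality constraint, and no further technical difficulty is expected.
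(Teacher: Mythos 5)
Your proposal is correct and follows essentially the same route the paper takes: the paper proves \cref{Prop:equiv_opt} only by noting that \eqref{mixed_FEM} is the optimality system of the saddle-point problem \eqref{mixed_FEM_saddle} and then invoking Fenchel--Rockafellar duality, which is exactly the two-step argument you carry out in detail. Your explicit evaluation of the inner supremum (using $\div X_h \subseteq M_h$ and the definition of $g_h$ as the $L^2(\Omega)$-projection) is a correct and welcome fleshing-out of the duality step the paper leaves implicit.
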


% Section: Reduction to a nearly semicoercive convex optimization problem
\section{Reduction to a nearly semicoercive convex optimization problem}
\label{Sec:Reduction}
In this section, we show that the finite element discretization~\eqref{mixed_FEM} can be reduced to a nearly semicoercive convex optimization problem if we proceed as in~\cite[Section~2.2]{LWXZ:2007}.
More precisely, we show that an iteration of the augmented Lagrangian method~\cite{Hestenes:1969,Powell:1969} for solving the constrained optimization problem~\eqref{mixed_FEM_opt} is equivalent to a nearly semicoercive convex optimization problem~\cite{LP:2025b}, where the convergence rate of the augmented Lagrangian method can be arbitrarily fast.

The augmented Lagrangian method~\cite{Hestenes:1969,Powell:1969} for solving~\eqref{mixed_FEM_opt} is summarized in \cref{Alg:aug}.

% Algorithm: Augmented Lagrangian method
\begin{algorithm}
\caption{Augmented Lagrangian method for solving~\eqref{mixed_FEM_opt}}
\begin{algorithmic}[]
\label{Alg:aug}
\STATE Given $\epsilon > 0$:
\STATE Choose $p_h^{(0)} \in M_h$.
\FOR{$n=0,1,2,\dots$}
    \STATE $\displaystyle
    \bu_h^{(n+1)} = \operatornamewithlimits{\arg\min}_{\bv \in X_h} \left\{ F (\bv) - \int_{\Omega} p_h^{(n)} \div \bv \,dx + \frac{1}{2\epsilon} \int_{\Omega} ( \div \bv - g_h )^2 \,dx \right\}
    $
    \STATE $\displaystyle
    p_h^{(n+1)} = p_h^{(n)} - \epsilon^{-1} ( \div \bu_h^{(n+1)} - g_h)
    $
\ENDFOR
\end{algorithmic}
\end{algorithm}

In the linear case, for instance when $\beta = 0$ in~\eqref{mixed_FEM}, it was shown in~\cite[Lemma~2.1]{LWXZ:2007} that the augmented Lagrangian method can achieve arbitrarily fast convergence by choosing $\epsilon$ sufficiently small; similar results can also be found in~\cite{LP:2009,LP:2017}.
In \cref{Thm:aug}, we show that this property remains valid in the nonlinear case.
For a differentiable convex functional $G$ on a finite-dimensional space $V$, we define the \textit{symmetrized Bregman divergence}~\cite{ABB:2013,NN:2009} by
\begin{equation}
\label{symmetrized_Bregman}
D_G^{\mathrm{sym}} (v, w) = \langle G'(v) - G'(w), v - w \rangle, \quad v, w \in V,
\end{equation}
where $G'(v) \in V^*$ denotes the G\^{a}teaux derivative of $G$ at $v$, and $\langle \cdot, \cdot \rangle \colon V^* \times V \to \mathbb{R}$ denotes the duality pairing on $V$.
In \cref{Thm:aug}, we use this definition with $G=F$ and $V=X_h$.
We also note that the augmented Lagrangian method in a general abstract setting is analyzed in \cref{App:Aug}.

% Theorem: Augmented Lagrangian method
\begin{theorem}
\label{Thm:aug}
Let $\{ ( \bu_h^{(n)}, p_h^{(n)} ) \}$ be the sequence generated by the augmented Lagrangian method presented in \cref{Alg:aug}.
Then, there exists a constant $\mu > 0$, independent of $\epsilon$, such that
\begin{equation*}
D_F^{\mathrm{sym}} (\bu_h^{(n+1)}, \bu_h) 
\leq \frac{\epsilon}{4} \| p_h^{(n)} - p_h \|_{L^2 (\Omega)}^2
\leq \frac{\epsilon}{4} \left( \frac{\epsilon}{\mu + \epsilon} \right)^{2n} \| p_h^{(0)} - p_h \|_{L^2 (\Omega)}^2,
\quad n \geq 0,
\end{equation*}
where the symmetrized Bregman divergence $D_F^{\mathrm{sym}}$ is defined in~\eqref{symmetrized_Bregman}.
\end{theorem}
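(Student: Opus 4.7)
The plan decouples the two inequalities: the first follows directly from the first-order optimality of the $\bv$-subproblem combined with Young's inequality, while the second reduces to a nonlinear analogue of the Schur-complement lower bound familiar from linear augmented-Lagrangian analyses.

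From the optimality of the $\bv$-subproblem in \cref{Alg:aug} and the dual update one first derives, for every $\bv \in X_h$, the compact relation $\int_\Omega F'(\bu_h^{(n+1)}) \cdot \bv \,dx = \int_\Omega p_h^{(n+1)} \div \bv \,dx$. Subtracting the analogous condition $\int_\Omega F'(\bu_h)\cdot\bv\,dx = \int_\Omega p_h \div\bv\,dx$ satisfied by the true solution of~\eqref{mixed_FEM_opt}, and then testing with $\bv = \bu_h^{(n+1)} - \bu_h$ while using $\div \bu_h = g_h$ and $\div \bu_h^{(n+1)} - g_h = -\epsilon(p_h^{(n+1)} - p_h^{(n)})$, yields the master identity $D_F^{\mathrm{sym}}(\bu_h^{(n+1)}, \bu_h) = -\epsilon \langle p_h^{(n+1)} - p_h,\, p_h^{(n+1)} - p_h^{(n)} \rangle$. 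Writing $a = p_h^{(n)} - p_h$ and $b = p_h^{(n+1)} - p_h$, this becomes $D_F^{\mathrm{sym}} = \epsilon \langle a, b\rangle - \epsilon\|b\|^2$, and combining $D_F^{\mathrm{sym}} \geq 0$ with the Young inequality $\langle a, b\rangle \leq \tfrac{1}{4}\|a\|^2 + \|b\|^2$ immediately gives $D_F^{\mathrm{sym}} \leq \tfrac{\epsilon}{4}\|a\|^2$, which is the first inequality.

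For the geometric contraction I would establish a reverse bound $D_F^{\mathrm{sym}}(\bu_h^{(n+1)}, \bu_h) \geq \mu \|p_h^{(n+1)} - p_h\|^2$ for a constant $\mu > 0$ independent of $\epsilon$. Combined with the master identity and Cauchy--Schwarz this yields $(\mu+\epsilon)\|b\|^2 \leq \epsilon\|a\|\|b\|$, hence $\|b\|\leq\tfrac{\epsilon}{\mu+\epsilon}\|a\|$, which iterates to the stated rate. The reverse bound rests on two classical ingredients: the strong $L^2$-convexity produced by the quadratic Darcy contribution to $F$, yielding $D_F^{\mathrm{sym}}(\bu_h^{(n+1)}, \bu_h) \gtrsim \|\bu_h^{(n+1)} - \bu_h\|_{L^2}^2$; and the Brezzi inf-sup condition for the Raviart--Thomas pair $(X_h, M_h)$, applied to the identity $\int_\Omega (p_h^{(n+1)} - p_h)\div\bv\,dx = \int_\Omega (F'(\bu_h^{(n+1)}) - F'(\bu_h))\cdot\bv\,dx$ for all $\bv \in X_h$, to bound $\|p_h^{(n+1)} - p_h\|_{L^2}$ by $\|\bu_h^{(n+1)} - \bu_h\|_{L^2}$.

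The hard part is making this last bound uniform in the iterates despite the superquadratic growth of $F'$ coming from the Forchheimer contribution $|\bv|\bv$: a naive Lipschitz estimate for $F'$ would make $\mu$ depend on $\|\bu_h^{(n+1)}\|$ and hence on the iteration index. The cleanest resolution is to invoke the abstract augmented-Lagrangian analysis of \cref{App:Aug} together with the nearly-semicoercive framework of~\cite{LP:2024b}, which interprets $\mu$ as the smallest generalized eigenvalue of a nonlinear Schur complement assembled from the Darcy part of $F$ and the divergence operator on $X_h$. Since the Forchheimer term contributes only nonnegatively to $D_F^{\mathrm{sym}}$, this generalized eigenvalue is bounded from below by its purely linear Darcy counterpart, giving a positive constant $\mu$ independent of both $\epsilon$ and the iterates.
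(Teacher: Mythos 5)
Your first two paragraphs reproduce, essentially verbatim, the content of the paper's \cref{Thm:aug_general}: the master identity $D_F^{\mathrm{sym}}(\bu_h^{(n+1)},\bu_h) = \epsilon\langle a,b\rangle - \epsilon\|b\|^2$ is exactly equation~\eqref{Thm9:aug_general} (after the sign conventions of~\eqref{aug_general_reduction}), and your Young-inequality step is the paper's choice $\theta = \tfrac12$. Your reverse bound $D_F^{\mathrm{sym}} \geq \mu\|p_h^{(n+1)}-p_h\|^2$ is likewise equivalent to the local strong convexity of $F^*\circ\div^*$ that the paper verifies, so the overall architecture is sound and matches the paper's route through \cref{App:Aug}.

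The genuine gap is in your last step. The claim that, because the Forchheimer term contributes nonnegatively to $D_F^{\mathrm{sym}}$, the constant $\mu$ is bounded \emph{below} by its Darcy counterpart, is backwards. Adding a convex monotone term to $F'$ increases $D_F^{\mathrm{sym}}$, but it increases $\|F'(\bu_h^{(n+1)})-F'(\bu_h)\|_{X_h^*}$ (and hence, via inf-sup, $\|p_h^{(n+1)}-p_h\|$) as well, and the second effect wins: in dual terms, making $F$ more convex makes $F^*$ \emph{smoother}, hence \emph{less} strongly convex. A one-dimensional check with $F'(t)=t+|t|t$ gives $D_F^{\mathrm{sym}}(t,0)/|F'(t)-F'(0)|^2 = (1+|t|)^{-1}<1$, strictly below the linear value. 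So monotonicity of $|\bv|\bv$ cannot rescue the uniformity of $\mu$; the quantity you need to control is precisely the local Lipschitz constant of $F'$ that you dismissed as ``naive.'' The correct resolution — and the one the paper uses via \cref{Rem:local_strong_convexity} — is that this local constant is harmless: the dual update is a proximal-point step for a convex functional, hence firmly nonexpansive for every $\epsilon$, so $\|p_h^{(n)}-p_h\|\leq\|p_h^{(0)}-p_h\|$ and the primal iterates $\bu_h^{(n)}=(F')^{-1}(\div^* p_h^{(n)})$ stay in a fixed bounded set $K$ determined by the initial guess, independently of $\epsilon$ and $n$. On that set $F'$ is Lipschitz with some constant $L_K$ (finite because $|\bv|\bv$ is $C^1$ on the finite-dimensional space $X_h$), and co-coercivity plus the injectivity of $\div^*$ give $\mu = \beta_{\mathrm{is}}^2/L_K>0$. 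This $\mu$ does depend on $K_0$ (and may be smaller than the pure-Darcy value), but it is independent of $\epsilon$, which is all the theorem asserts.
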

\begin{proof}
Thanks to~\eqref{aug_general_reduction}, \cref{Rem:local_strong_convexity}, and \cref{Thm:aug_general}, it suffices to verify that $F^* \circ \div^*$ is locally strongly convex on $M_h$, where $F^* \colon X_h \rightarrow \overline{\mathbb{R}}$ is the Legendre--Fenchel conjugate of $F$ defined in~\eqref{Legendre_Fenchel}, and $\div^* \colon M_h \rightarrow X_h$ denotes the $L^2(\Omega)$-adjoint of $\div \colon X_h \rightarrow M_h$.
Since $F$ is locally smooth on $X_h$~(see \cref{Lem:d1}(a)), it follows from~\cite{GR:2008} that $F^*$ is locally strongly convex.
Moreover, because $\div^*$ is injective, the composition $F^* \circ \div^*$ inherits local strong convexity.
This completes the proof.
\end{proof}

% Remark: Choosing epsilon
\begin{remark}
\label{Rem:epsilon}
Since \cref{Thm:aug} shows that smaller $\epsilon$ yields faster convergence of \cref{Alg:aug}, it may be desirable to choose $\epsilon$ as small as possible.
However, in practice, choosing $\epsilon$ too small may lead to numerical instability, which should be avoided for robust computation.
Based on the experiments in \cref{Sec:Numerical}, we recommend using a moderately small value of $\epsilon$; values in the range $10^{-3}$--$10^{-1}$ already yield fast augmented Lagrangian convergence, and $\epsilon=10^{-2}$ serves as a stable default choice in the tested examples.
Relevant discussions for the linear case can be found in, e.g.,~\cite{LP:2017}.
\end{remark}

\cref{Thm:aug} implies that the convergence rate of the augmented Lagrangian method for solving~\eqref{mixed_FEM_opt} can be made arbitrarily fast by choosing $\epsilon$ sufficiently small.  
In other words, the number of iterations required to achieve a prescribed level of accuracy becomes very small as $\epsilon$ decreases.
Relevant numerical results illustrating this behavior will be provided in \cref{Sec:Numerical}.

In this sense, the mixed formulation~\eqref{mixed_FEM} can be viewed as being reduced to a convex optimization problem of the form
\begin{equation}
\label{mixed_FEM_nearly}
\min_{\bv \in X_h} \left\{ F^{\epsilon} (\bv ; q_h) := \frac{1}{2} \int_{\Omega} ( \div \bv - g_h )^2 \,dx + \epsilon \left[ F (\bv) -  \int_{\Omega} q_h \div \bv \,dx \right] \right\}
\end{equation}
for some $q_h \in M_h$.
When there is no ambiguity, we simply write $F^{\epsilon} (\bv) = F^{\epsilon} (\bv; q_h)$.
In what follows, it suffices to focus on solving~\eqref{mixed_FEM_nearly}.
We denote the solution of~\eqref{mixed_FEM_nearly} by $\bu_h^{\epsilon} \in X_h$.

A major difficulty in solving~\eqref{mixed_FEM_nearly} numerically is that the problem is nearly semicoercive~\cite{LP:2025b}~(cf.\ nearly singular in the linear case~\cite{LWXZ:2007}) due to the nontrivial kernel of $\div$.  
As a result,~\eqref{mixed_FEM_nearly} becomes increasingly ill-conditioned as $\epsilon$ becomes small, causing conventional iterative solvers to converge very slowly.

In the following section, we present the construction of an efficient iterative solver for~\eqref{mixed_FEM_nearly} whose convergence is robust as \(\epsilon\) becomes small.

% Section: Multilevel method
\section{Multilevel methods}
\label{Sec:Multilevel}
In this section, we propose a multilevel method for solving the nearly semicoercive convex optimization problem~\eqref{mixed_FEM_nearly} that is robust with respect to the augmented Lagrangian parameter~$\epsilon$.
Building upon the abstract theory of subspace correction methods for solving nearly semicoercive convex optimization problems developed in~\cite{LP:2025b}, we construct a patch-based multilevel method with Schwarz smoothers, as in~\cite{AFW:2000,LWC:2009}.
We also present a backtracking strategy for line search in the multilevel method~\cite{Park:2022a}, which makes the algorithm more practical by avoiding a predetermined step size and often improving the convergence rate.

% Subsection: Multilevel subspace correction
\subsection{Multilevel subspace correction}
% Figure: Multilevel grids
\begin{figure}
    \centering
    \includegraphics[width=\textwidth]{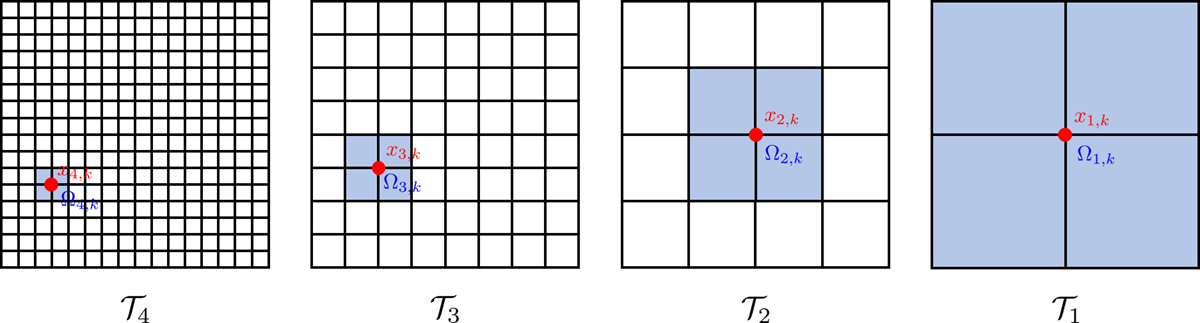}
    \caption{Multilevel mesh hierarchy $\{ \mathcal{T}_j \}_{j=1}^J$ for a rectangular grid $\mathcal{T}_h$ ($J = 4$).
    For each vertex $x_{j,k}$ of the mesh $\mathcal{T}_j$ (depicted in red), the blue region represents the corresponding subdomain $\Omega_{j,k}$.}
    \label{Fig:grids}
\end{figure}

We assume that the partition $\cT_h$ is part of a nested sequence of quasi-uniform partitions $\{ \cT_j \}_{j=1}^J$, where $\cT_h = \cT_J$ and each $\cT_j$, $1 \leq j \leq J$, has a characteristic element size $h_j$.
The quasi-uniformity constants are independent of $j$, and the mesh sizes satisfy $h_j \eqsim \gamma^j$ for some coarsening factor $\gamma \in (0,1)$.
See \cref{Fig:grids} for an example of a rectangular grid $\cT_h$ when $J = 4$.

For each $\cT_j$, we define the finite element space $V_j$, of the same type as $V := X_h$.
It follows that
\begin{equation*}
V_1 \subset V_2 \subset \dots \subset V_J = V.
\end{equation*}
Let $\cV_j$ be the set of all interior vertices of $\cT_j$, and let $n_j = | \cV_j |$.
For each $x_{j,k} \in \cV_j$, $1 \leq k \leq n_j$, we define
\begin{equation*}
\Omega_{j,k} = \bigcup \{T \in \cT_j : x_{j,k} \in \partial T \}.
\end{equation*}
We define the subspace $V_{j,k}$ as
\begin{equation*}
V_{j,k} = \{ v \in V_j : \operatorname{supp} v \subset \overline{\Omega}_{j,k} \},
\end{equation*}
so that we have
\begin{equation*}
    V_j = \sum_{k=1}^{n_j} V_{j,k}.
\end{equation*}
Then we have the following vertex-based multilevel space decomposition of $V$:
\begin{equation}
\label{space_decomposition}
V = \sum_{j=1}^J \sum_{k=1}^{n_j} V_{j,k}.
\end{equation}
Such a vertex-based space decomposition for vector field problems was previously considered in, e.g.,~\cite{AFW:2000,LWC:2009}.

Now, we are ready to present our proposed multilevel method.
The proposed method is a parallel subspace correction method~\cite{LP:2025b,Park:2020,TX:2002} for solving~\eqref{mixed_FEM_nearly} based on the multilevel space decomposition~\eqref{space_decomposition}, as summarized in \cref{Alg:multilevel}.

% Algorithm: Multilevel method
\begin{algorithm}
\caption{Parallel multilevel method for solving~\eqref{mixed_FEM_nearly}}
\begin{algorithmic}[]
\label{Alg:multilevel}
\STATE Given $\tau > 0$:
\STATE Choose $\bu^{(0)} \in V$.
\FOR{$n=0,1,2,\dots$}
    \FOR{$j = 1, 2, \dots, J$}
        \FOR{$k = 1, 2, \dots, n_j$}
            \STATE $\displaystyle \bw_{j,k}^{(n+1)} = \operatornamewithlimits{\arg\min}_{\bw_{j,k} \in V_{j,k}} F^{\epsilon} ( \bu^{(n)} + \bw_{j,k} )
            $
        \ENDFOR
    \ENDFOR
    \STATE $\displaystyle
    \bu^{(n+1)} = \bu^{(n)} + \tau \sum_{j=1}^J \sum_{k=1}^{n_j} \bw_{j,k}^{(n+1)}
    $
\ENDFOR
\end{algorithmic}
\end{algorithm}

Note that each local problem for $\bw_{j,k}^{(n+1)}$ in \cref{Alg:multilevel} is a convex optimization problem of small dimension, which can be solved robustly and efficiently using second-order optimization algorithms, such as the damped Newton method~\cite{BV:2004}.
Moreover, the loops over $j$ and $k$ in \cref{Alg:multilevel} can be fully parallelized, making the algorithm well-suited for implementation on parallel computing architectures.
The step size~$\tau$ in \cref{Alg:multilevel} must satisfy a certain condition (to be specified later) to ensure convergence of the algorithm.
Nevertheless, as we will show later, the backtracking line search proposed in~\cite{Park:2022a} can be employed to efficiently determine a suitable step size~$\tau$ with only marginal additional computational cost at each iteration of \cref{Alg:multilevel}.

% Remark: Linear case
\begin{remark}
\label{Rem:BPX}
In the linear case, i.e., when $\beta = 0$ in~\eqref{mixed_FEM_nearly}, \cref{Alg:multilevel} can serve as a preconditioner.
In fact, it coincides with the Bramble--Pasciak--Xu preconditioner~\cite{BPX:1990} or the multilevel additive Schwarz preconditioner~\cite{Zhang:1992,Zhang:1994}.
Convergence analysis of multilevel methods for linear problems posed in $H(\div)$ spaces can be found in, e.g.,~\cite{AFW:2000,HT:2000}.
\end{remark}

% Remark: Successive methods
\begin{remark}
\label{Rem:successive}
While this paper focuses on parallel multilevel methods (multilevel additive Schwarz), one may also consider successive multilevel methods (multilevel multiplicative Schwarz), such as V-cycle multigrid.
These approaches often perform well in practice and may even achieve better convergence rates than parallel methods; see~\cite[Section~4]{CXZ:2008} and~\cite[Lemma~4.6]{XZ:2017}.
Moreover, they can be parallelized using standard coloring techniques~\cite[Section~5.1]{Park:2020}.

However, the analysis of successive subspace correction methods for nearly semicoercive problems is still lacking.
Although the linear case was studied in~\cite{LWXZ:2007}, the analysis relies on the Xu--Zikatanov identity~\cite{XZ:2002,XZ:2017}, whose extension to convex optimization remains open.
For this reason, we leave successive multilevel methods and their analysis for future work.
\end{remark}

% Subsection: Convergence analysis
\subsection{Convergence analysis}
We present a convergence analysis of the proposed multilevel method in \cref{Alg:multilevel}, based on the abstract theory developed in~\cite{LP:2025b}.
For completeness, a summary of the abstract framework is provided in \cref{App:Nearly}.

Throughout this section, we use the notations
$A \lesssim B$ and $B \gtrsim A$ to indicate that there exists a positive constant $C$, independent of the geometric parameters such as $h$ and $J$, such that $A \leq C B$.
We write $A \eqsim B$ when both $A \lesssim B$ and $A \gtrsim B$ hold.

First, we observe that the multilevel space decomposition~\eqref{space_decomposition} satisfies the strengthened convexity condition~\eqref{strengthened_convexity} with
\begin{equation}
\label{tau_0}
\tau_0 \gtrsim J^{-1} \eqsim | \log h |^{-1},
\end{equation}
as a consequence of a coloring argument~\cite[Section~5.1]{Park:2020} applied at each level.

Let $F_0$ and $F_1$ be the semicoercive and coercive parts of the nearly semicoercive energy functional $F^{\epsilon}$, respectively, namely,
\begin{equation}
\label{F0}
F_0 (\bv) = \frac{1}{2} \int_{\Omega} (\div \bv - g_h )^2 \,dx, \quad
F_1(\bv) = F(\bv) - \int_{\Omega} q_h \div \bv \,dx,
\quad \bv \in V,
\end{equation}
so that $F^{\epsilon} = F_0 + \epsilon F_1$.
We define the Bregman divergences associated with $ F_0 $ and $F_1$ by
\begin{equation}
\label{d1}
\begin{aligned}
d_0(\bw; \bv) &= F_0(\bv + \bw) - F_0(\bv) - \langle F_0'(\bv), \bw \rangle, \\
d_1(\bw; \bv) &= F_1(\bv + \bw) - F_1(\bv) - \langle F_1'(\bv), \bw \rangle,
\end{aligned}
\quad \bv, \bw \in V,
\end{equation}
where $\langle \cdot, \cdot \rangle$ denotes the duality pairing on $V$.
By a direct computation, we observe that
\begin{equation}
\label{d0}
d_0(\bw; \bv) = \frac{1}{2} \| \div \bw \|_{L^2(\Omega)}^2,
\quad \bv, \bw \in V.
\end{equation}
Moreover, we have the following lemma concerning $d_1$, whose proof is elementary; see, for instance,~\cite{Ciarlet:2002}.

% Lemma: Bounds for d_1
\begin{lemma}
\label{Lem:d1}
The Bregman divergence $d_1$ given in~\eqref{d1} satisfies the following:
\begin{enumerate}[(a)]
\item $d_1(\bw; \bv) \lesssim \| \bw \|_{L^2(\Omega)}^2 + ( \| \bv \|_{L^3 (\Omega)} + \| \bw \|_{L^3 (\Omega)} )\| \bw \|_{L^3(\Omega)}^2$.
\item $d_1(\bw; \bv) \gtrsim \| \bw \|_{L^2(\Omega)}^2 + \| \bw \|_{L^3(\Omega)}^3$.
\end{enumerate}
\end{lemma}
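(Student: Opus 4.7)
The plan is to compute $d_1(\bw;\bv)$ directly from the definition. Since $\bv \mapsto \int_\Omega \bf\cdot\bv\,dx$ and $\bv \mapsto \int_\Omega q_h \div\bv\,dx$ are affine, they contribute nothing to any Bregman divergence, so I reduce to
\begin{equation*}
d_1(\bw;\bv) = \frac{\mu}{2\rho}\int_\Omega \bK^{-1}\bw\cdot\bw\,dx + \frac{\beta}{\rho}\int_\Omega D_\phi\bigl(\bw(x); \bv(x)\bigr)\,dx,
\end{equation*}
where $\phi(\bxi) = \tfrac{1}{3}|\bxi|^3$ and $D_\phi$ is its (pointwise, $\mathbb{R}^d$-valued) Bregman divergence. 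Under the standing assumption that $\bK^{-1}$ is uniformly bounded above and below, the quadratic term is equivalent to $\|\bw\|_{L^2(\Omega)}^2$, which appears on both sides of (a) and (b).

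The core of the argument is therefore the pointwise two-sided estimate
\begin{equation*}
D_\phi\bigl(\bw(x);\bv(x)\bigr) \eqsim \bigl(|\bv(x)| + |\bw(x)|\bigr)\,|\bw(x)|^2,
\end{equation*}
which is the classical comparison for the Bregman divergence of $\tfrac{1}{p}|\cdot|^p$ specialized to $p=3$. I would record this by reducing to the plane $\mathrm{span}\{\bv(x),\bw(x)\}$ and invoking the elementary identity for $|\bxi+\boldsymbol{\eta}|^3 - |\bxi|^3 - 3|\bxi|\bxi\cdot\boldsymbol{\eta}$, and then cite \cite{Ciarlet:2002} for the details.

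From the pointwise bound, (b) follows immediately by discarding the $|\bv|$ contribution, since $(|\bv|+|\bw|)|\bw|^2 \geq |\bw|^3$, and integrating yields $\|\bw\|_{L^3(\Omega)}^3$. For (a), I split $(|\bv|+|\bw|)|\bw|^2 = |\bv||\bw|^2 + |\bw|^3$ and apply H\"older's inequality with exponents $3$ and $3/2$, giving
\begin{equation*}
\int_\Omega |\bv|\,|\bw|^2\,dx \;\leq\; \|\bv\|_{L^3(\Omega)}\,\bigl\||\bw|^2\bigr\|_{L^{3/2}(\Omega)} \;=\; \|\bv\|_{L^3(\Omega)}\,\|\bw\|_{L^3(\Omega)}^2,
\end{equation*}
while the $|\bw|^3$ piece integrates to $\|\bw\|_{L^3(\Omega)}^3 = \|\bw\|_{L^3(\Omega)}\cdot\|\bw\|_{L^3(\Omega)}^2$. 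Combined, these yield the factor $(\|\bv\|_{L^3(\Omega)} + \|\bw\|_{L^3(\Omega)})\|\bw\|_{L^3(\Omega)}^2$ appearing in (a).

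The only mildly delicate step is the pointwise comparison for $D_\phi$; the upper inequality is straightforward from Taylor's formula and convexity, but the matching lower bound requires either a case split depending on whether $|\bw| \lesssim |\bv|$ or not, or a direct convexity argument exploiting that $\phi$ is $C^1$ with $\phi''(\bxi) \eqsim |\bxi|\,\bI$ away from the origin. I would treat this as routine and cite \cite{Ciarlet:2002}, keeping the presented proof short.
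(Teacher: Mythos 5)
Your proposal is correct and is precisely the elementary argument the paper has in mind: the paper omits the proof, remarking only that it is elementary and citing \cite{Ciarlet:2002}, which is where the pointwise two-sided comparison $D_\phi(\bw;\bv) \eqsim (|\bv|+|\bw|)|\bw|^2$ for $\phi = \tfrac13|\cdot|^3$ comes from. Dropping the affine terms, treating the $\bK^{-1}$-weighted quadratic part as equivalent to $\|\bw\|_{L^2(\Omega)}^2$, and applying H\"older with exponents $3$ and $3/2$ is exactly the intended route.
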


In the abstract convergence theory presented in \cref{App:Nearly}, several assumptions are required to ensure the convergence of subspace correction methods; see \cref{Thm:nearly} for the convergence result and \cref{Ass:smooth,Ass:uniform,Ass:kernel,Ass:triangle} for the corresponding assumptions. In the following, we verify that \cref{Alg:multilevel} satisfies all these assumptions.

The local smoothness condition (\cref{Ass:smooth}) follows directly from~\eqref{d0} and \cref{Lem:d1}(a).
The local uniform convexity condition (\cref{Ass:uniform}) with $p = 3$ is similarly implied by~\eqref{d0} and \cref{Lem:d1}(b).
Namely, for any bounded and convex subset $K \subset V$, we have
\begin{equation}
\label{mu}
\mu_{0, K} := \inf_{\bv,\, \bv + \bw \in K} \frac{d_0 ( \bw; \bv )}{\| \div \bw \|_{L^2 (\Omega)}^3} \gtrsim 1,
\quad
\mu_{1, K} := \inf_{\bv,\, \bv + \bw \in K} \frac{(d_0 + d_1) (\bw; \bv)}{\| \bw \|_X^3} \gtrsim 1.
\end{equation}

The kernel decomposition condition (\cref{Ass:kernel}) is satisfied by the patch-based multilevel decomposition~\eqref{space_decomposition}, as discussed in~\cite[Section~5.1]{LWXZ:2007}. Finally, the triangle inequality-like property (\cref{Ass:triangle}) is a direct consequence of~\cite[Lemma~5.4]{LY:2001}; see also~\cite[Example~C.3]{LP:2025b}.

Given an initial guess $\bu^{(0)} \in V$, we define
\begin{equation}
\label{K_0}
K_0 := \{ \bv \in V : F^{\epsilon} ( \bv ) \leq F^{\epsilon} ( \bu^{(0)} ) \}.
\end{equation}
Since all assumptions in \cref{Thm:nearly} have been verified, we obtain the following convergence result for \cref{Alg:multilevel} by invoking that theorem.

% Theorem: Convergence theorem
\begin{theorem}
\label{Thm:convergence}
In \cref{Alg:multilevel}, assume that $\tau \in (0, \tau_0]$, where $\tau_0$ was given in~\eqref{tau_0}.
Then there exists $\zeta^* > 0$ such that, if $F^{\epsilon} (\bu^{(0)}) - F^{\epsilon} (\bu_h^{\epsilon}) > \zeta^*$, then
\begin{equation*}
F^{\epsilon} (\bu^{(1)}) - F^{\epsilon} (\bu_h^{\epsilon}) \leq \left(1 - \frac{\tau}{2} \right) ( F^{\epsilon} (\bu^{(0)}) - F^{\epsilon} (\bu_h^{\epsilon}) ),
\end{equation*}
and otherwise,
\begin{equation*}
F^{\epsilon} (\bu^{(n)}) - F^{\epsilon} (\bu_h^{\epsilon})
\leq \frac{ C_1^{3} }{ \tau^{3} C_2^{2} n^{3} },
\quad n \geq 1,
\end{equation*}
where $C_1$ is a positive constant independent of $\epsilon$, except for its implicit dependence on $K_0$, and $C_2 = \min \{\mu_{0, K_0}, \mu_{1, K_0} \}$~(see~\eqref{mu}).
\end{theorem}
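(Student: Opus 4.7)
The plan is to reduce the proof to a direct application of the abstract convergence result \cref{Thm:nearly} stated in \cref{App:Nearly}, which governs parallel subspace correction methods applied to nearly semicoercive problems of the form $F^{\epsilon} = F_0 + \epsilon F_1$. That theorem produces exactly the two-phase statement we need: a linear phase with contraction factor $1 - \tau/2$ as long as the energy gap exceeds a threshold $\zeta^*$, followed by a polynomial $O(n^{-p})$ phase in the asymptotic regime. The exponent $p$ is dictated by the uniform convexity order of $F$, which in our case is $p = 3$ owing to the cubic Forchheimer term $\int_{\Omega} | \bv |^3 \,dx$ in \eqref{F}; this yields the $n^{-3}$ rate claimed in the theorem.

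First, I would verify the four structural hypotheses of \cref{Thm:nearly} in the present setting. \cref{Ass:smooth}, the local smoothness of $d_0$ and $d_1$, is immediate from \eqref{d0} and \cref{Lem:d1}(a). \cref{Ass:uniform}, the local uniform convexity with exponent $p=3$, follows from \eqref{d0} and \cref{Lem:d1}(b), and it supplies the moduli $\mu_{0,K_0}$ and $\mu_{1,K_0}$ in \eqref{mu} that feed into $C_2$. \cref{Ass:kernel}, a stable decomposition of $\ker(\div) \cap V$ compatible with the multilevel splitting \eqref{space_decomposition}, is available from the vertex-patch construction; see \cite[Section~5.1]{LWXZ:2007} together with \cite{AFW:2000,LWC:2009}. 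Finally, \cref{Ass:triangle}, the triangle-like inequality for $d_1$, is a direct consequence of \cite[Lemma~5.4]{LY:2001}. All four assumptions have been essentially verified in the discussion preceding the theorem and only need to be explicitly packaged in the form required by \cref{Thm:nearly}.

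Next, I would identify the admissible step-size range. The strengthened convexity condition \eqref{strengthened_convexity} for the patch-based decomposition \eqref{space_decomposition} holds with $\tau_0 \gtrsim J^{-1} \eqsim | \log h |^{-1}$ by the standard multilevel coloring argument of \cite[Section~5.1]{Park:2020}: on each level only finitely many colors of vertex patches are needed, and the $J$ levels combine additively. Any $\tau \in (0, \tau_0]$ is therefore admissible, and \cref{Thm:nearly} outputs both the linear phase estimate and the sublinear $O(n^{-3})$ estimate with $C_2 = \min\{\mu_{0, K_0}, \mu_{1, K_0}\}$ as claimed, where the threshold $\zeta^*$ is determined by the geometry of the sublevel set $K_0$ defined in \eqref{K_0}.

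The main technical obstacle will be tracking the $\epsilon$-independence of $C_1$. Schematically, the abstract theorem returns a constant assembled from the stable-decomposition constant and from a bound on the effective ``diameter'' of the iterate sequence, measured through $d_0$ and $d_1$ separately rather than through the combined functional $F^{\epsilon}$. Since $F_0$ and $F_1$ are each independent of $\epsilon$, and since all constants produced above depend only on $K_0$, the resulting $C_1$ inherits $\epsilon$-independence provided the iterates stay in an $\epsilon$-uniform bounded set. This confinement follows from the descent property $F^{\epsilon}(\bu^{(n+1)}) \leq F^{\epsilon}(\bu^{(n)})$ — which is where $\tau \leq \tau_0$ enters — together with the observations that $F_0 \geq 0$ controls $\| \div \bv \|_{L^2(\Omega)}^2$ and $F_1$ controls $\| \bv \|_{L^3(\Omega)}^3$ up to lower-order terms, so that the iterates remain within the $F$-sublevel set $K_0$. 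Assembling these pieces and invoking \cref{Thm:nearly} then yields the asserted dichotomy.
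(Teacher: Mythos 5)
Your proposal is correct and follows essentially the same route as the paper: verify \cref{Ass:smooth,Ass:uniform,Ass:kernel,Ass:triangle} via \eqref{d0}, \cref{Lem:d1}, the kernel decomposition of \cite{LWXZ:2007}, and the triangle-like inequality of \cite{LY:2001}; establish the strengthened convexity constant $\tau_0 \gtrsim J^{-1}$ by the level-wise coloring argument of \cite{Park:2020}; and then invoke \cref{Thm:nearly}(b) with $p=3$ to obtain the exponents $p/(p-2)=3$ and $2/(p-2)=2$. Your additional remarks on the $\epsilon$-independence of $C_1$ and the confinement of the iterates to $K_0$ are consistent with the paper's own caveat that $C_1$ depends on $\epsilon$ only implicitly through $K_0$.
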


We note that the convergence theorem above explains how the convergence rate of \cref{Alg:multilevel} depends on the augmented Lagrangian parameter $\epsilon$, but not explicitly on the mesh size $h$.
In fact, the constant $C_1$ in \cref{Thm:convergence}, which is defined in terms of stable decompositions of the multilevel subspaces~\eqref{space_decomposition} (see~\cite[Theorem~5.8]{LP:2025b}), may implicitly depend on $h$.

While $h$-independence of multilevel space decompositions for linear problems in $H(\div; \Omega)$ has been established in the literature~\cite{AFW:2000,HT:2000} using the Helmholtz decomposition, extending these arguments to our setting is highly nontrivial.
The main difficulty stems from the fact that our problem is posed on the space $X = H(\div; \Omega) \cap L^3(\Omega)^d$, which requires controlling the $L^3 (\Omega)^d$-norm in addition to the $H(\div; \Omega)$-norm.

In \cref{Sec:Numerical}, we provide numerical evidence indicating that the convergence of \cref{Alg:multilevel} is robust with respect to $h$, similarly to the case of linear problems, across a range of problem settings.

% Remark: Linear convergence
\begin{remark}
\label{Rem:linear_convergence}
Combining \cref{Lem:d1}(b) with the inverse inequality $\| \bw \|_{L^3(\Omega)} \lesssim h^{-\frac{d}{6}} \| \bw \|_{L^2(\Omega)}$~\cite[Theorem~4.5.11]{BS:2008}, we can verify that the local uniform convexity condition~(\cref{Ass:uniform}) holds with $p = 2$, although the constant $\mu_{1,K}$ depends on the mesh size $h$.
Consequently, by invoking \cref{Thm:nearly}(a), we obtain the linear convergence of \cref{Alg:multilevel}.
However, the resulting convergence rate is suboptimal due to its dependence on $h$, which originates from the inverse inequality.
\end{remark}

% Remark: Park:2024a
\begin{remark}
\label{Rem:Park:2024a}
In~\cite{Park:2024a}, an alternative approach was proposed for analyzing the $\epsilon$-independence of the convergence rate of subspace correction methods for solving problems of the form~\eqref{mixed_FEM_nearly}.
However, to apply the strategy developed in~\cite{Park:2024a}, one must construct a stable decomposition for the multilevel spaces~\eqref{space_decomposition} that satisfies certain specific conditions~\cite[Assumption~3.1]{Park:2024a}, which is not straightforward.
\end{remark}

% Subsection: Backtracking line search
\subsection{Backtracking line search}
As stated in \cref{Thm:convergence}, the step size $\tau$ must be chosen smaller than $\tau_0$ given in~\eqref{tau_0} to guarantee the convergence of \cref{Alg:multilevel}.
However, in this case, computing an explicit lower bound for $\tau_0$ is needed, and any such estimate may be overly conservative, potentially resulting in slow convergence of \cref{Alg:multilevel}.
To address this issue, we incorporate a backtracking line search strategy for selecting the step size, as introduced in~\cite{Park:2022a}, into the proposed multilevel method.
The resulting algorithm is summarized in \cref{Alg:multilevel_backt}.

% Algorithm: Multilevel method with backtracking
\begin{algorithm}
\caption{Parallel multilevel method for solving~\eqref{mixed_FEM_nearly} with backtracking}
\begin{algorithmic}[]
\label{Alg:multilevel_backt}
\STATE Choose $\bu^{(0)} \in V$ and $\tau^{(0)} > 0$.
\FOR{$n=0,1,2,\dots$}
    \FOR{$j = 1, 2, \dots, J$}
        \FOR{$k = 1, 2, \dots, n_j$}
            \STATE $\displaystyle \bw_{j,k}^{(n+1)} = \operatornamewithlimits{\arg\min}_{\bw_{j,k} \in V_{j,k}} F^{\epsilon} ( \bu^{(n)} + \bw_{j,k} )
            $
        \ENDFOR
    \ENDFOR
    \STATE $\displaystyle \tau \leftarrow 2 \tau^{(n)}$
    \REPEAT
        \STATE $\displaystyle
        \bu^{(n+1)} = \bu^{(n)} + \tau \sum_{j=1}^J \sum_{k=1}^{n_j} \bw_{j,k}^{(n+1)}
        $
        \IF{$\displaystyle F^{\epsilon} (\bu^{(n)}) - F^{\epsilon} (\bu^{(n+1)}) < \tau \sum_{j=1}^J \sum_{k=1}^{n_j} \left( F^{\epsilon} (\bu^{(n)}) - F^{\epsilon} (\bu^{(n)} + \bw_{j,k}^{(n+1)}) \right)$}
            \STATE $\displaystyle \tau \leftarrow \frac{\tau}{2}$
        \ENDIF
    \UNTIL{$\displaystyle F^{\epsilon} (\bu^{(n)}) - F^{\epsilon} (\bu^{(n+1)}) \geq \tau \sum_{j=1}^J \sum_{k=1}^{n_j} \left( F^{\epsilon} (\bu^{(n)}) - F^{\epsilon} (\bu^{(n)} + \bw_{j,k}^{(n+1)}) \right)$}
    \STATE $\displaystyle \tau^{(n+1)} = \tau$
\ENDFOR
\end{algorithmic}
\end{algorithm}

The key idea behind the backtracking strategy is to adaptively select a step size $\tau$ such that the global energy decay exceeds the appropriately weighted sum of the local energy decays.
From the perspective of computational cost, at the $n$th outer iteration of \cref{Alg:multilevel_backt}, we need to evaluate $F^{\epsilon}(\bu^{(n)})$, $F^{\epsilon}(\bu^{(n)} + \bw_{j,k}^{(n+1)})$ for each $1 \le j \le J$ and $1 \le k \le n_j$, and $F^{\epsilon}(\bu^{(n+1)})$ for different trial values of the step size $\tau$.

Among these quantities, $F^{\epsilon}(\bu^{(n)})$ has already been computed in the previous iteration and can therefore be reused.
The values $F^{\epsilon}(\bu^{(n)} + \bw_{j,k}^{(n+1)})$ rely on the local updates $\bw_{j,k}^{(n+1)}$ and can thus be evaluated locally.
Consequently, the only global computation required is the evaluation of $F^{\epsilon}(\bu^{(n+1)})$, which must be repeated for different trial values of $\tau$.
However, the number of trials needed to determine $\tau^{(n+1)}$ is typically small when $\tau^{(n)}$ is chosen suitably~(cf.~\cite[Lemma~4]{Nesterov:2013}).
Hence, the additional computational cost of the backtracking procedure is modest.

It was shown in~\cite[Lemma~3.1]{Park:2022a} that the backtracking procedure always terminates, and a lower bound for $\tau^{(n)}$ can be established in terms of $\tau_0$ defined in~\eqref{tau_0}.
More precisely, from the strengthened convexity condition~\eqref{strengthened_convexity}, we have
\begin{equation*}
\begin{cases}
    \tau^{(n)} \geq \tau_0 & \text{ if } \tau^{(0)} = 2^m \tau_0 \text{ for some } m \in \mathbb{Z}, \\
    \tau^{(n)} > \frac{\tau_0}{2} & \text{ otherwise.}
\end{cases}
\end{equation*}
Therefore, \cref{Alg:multilevel_backt} does not require a priori knowledge of $\tau_0$.

Moreover, both theoretical and numerical results in~\cite{Park:2022a} demonstrate that the backtracking strategy not only eliminates the need to tune the step size $\tau$ but can also improve the convergence rate.
We do not give a separate convergence proof for \cref{Alg:multilevel_backt} in this paper.
This variant uses the same exact local solvers as \cref{Alg:multilevel}, but replaces the fixed step size by the backtracking strategy; its convergence is justified by the theory of parallel subspace correction methods with backtracking presented in~\cite{Park:2022a}.

% Remark: Restricted Schwarz
\begin{remark}
\label{Rem:RSM}
While the backtracking line search introduced here is designed to satisfy the strengthened convexity condition~\eqref{strengthened_convexity}, enforcing this condition is essential because we use the overlapping space decomposition~\eqref{space_decomposition}.
Namely, since updates on the overlap regions are counted multiple times, the step size $\tau$ must be chosen sufficiently small so that the total contribution is not too large.

For linear problems, the restricted Schwarz method~\cite{CS:1999,EG:2003,FS:2001} is a successful approach for handling this multiple-counting issue.
Although overlapping decompositions are still used, only the correction supported on the nonoverlapping region is retained, which avoids multiple counting and often improves convergence.

In this sense, restricted Schwarz may serve as an alternative to the backtracking line search.
However, the restriction makes the algorithm nonsymmetric, which may complicate the analysis; recent analyses of additive Schwarz methods for convex optimization, such as~\cite{LP:2025b,Park:2020}, rely heavily on symmetry.
\end{remark}

% Subsection: FAS
\subsection{Full approximation scheme (FAS)}
At each iteration of either \cref{Alg:multilevel,Alg:multilevel_backt}, we need to solve local problems of the form
\begin{equation}
\label{local_exact}
\min_{\bw_{j,k} \in V_{j,k}} F^{\epsilon}(\bu^{\text{old}}+\bw_{j,k}),
\end{equation}
where $\bu^{\text{old}} \in V$ denotes the current iterate.
However, the problem~\eqref{local_exact} requires evaluating the energy $F^{\epsilon}$ on the finest scale at $\bu^{\text{old}}$, which may be computationally expensive~(cf.~\cite[Remark~4.2]{TX:2002}).

To further reduce the computational cost, we introduce the full approximation scheme~(FAS)~\cite{Brandt:1977,Brandt:1984,BF:2024,CHW:2020}, which replaces~\eqref{local_exact} with an approximate problem that only requires coarse-scale computations on $V_{j,k}$.
For completeness, we briefly derive the FAS formulation.

We start from the following variational identity, which follows directly from the optimality condition of~\eqref{mixed_FEM_nearly}:
\begin{equation}
\label{trivial}
\langle (F^{\epsilon})'(\bu_h^{\epsilon})-(F^{\epsilon})'(\bu^{\text{old}}), \bv \rangle
= -\langle (F^{\epsilon})'(\bu^{\text{old}}), \bv \rangle,
\quad \bv \in V.
\end{equation}
To reduce the problem~\eqref{trivial} to one posed on the $j$th-level subspace $V_j$ with an unknown $\tilde{\bu}_j \in V_j$, we replace $\bu_h^{\epsilon}$, $\bu^{\text{old}}$, and the test space $V$ in~\eqref{trivial} by $\tilde{\bu}_j$, $\Pi_j \bu^{\text{old}}$, and $V_j$, respectively.
Here, $\Pi_j \colon V \to V_j$ denotes the canonical interpolation operator~\cite{Brandt:1984,BF:2024}.
This leads to the following variational problem: find $\tilde{\bu}_j \in V_j$ such that
\begin{equation}
\label{reduced}
\langle (F^{\epsilon})'(\tilde{\bu}_j)-(F^{\epsilon})'(\Pi_j \bu^{\text{old}}), \bv_j \rangle
= -\langle (F^{\epsilon})'(\bu^{\text{old}}), \bv_j \rangle,
\quad \bv_j \in V_j.
\end{equation}

When solving~\eqref{reduced} by a patch-based Schwarz method~\cite{AFW:2000,LWC:2009}, we need to solve the following variational problem defined on $V_{j,k}$:
find $\tilde{\bw}_{j,k} \in V_{j,k}$ such that
\begin{equation}
\label{FAS}
\langle (F^{\epsilon})'(\Pi_j \bu^{\text{old}} + \tilde{\bw}_{j,k}) -(F^{\epsilon})'(\Pi_j \bu^{\text{old}}), \bv_{j,k} \rangle
= -\langle (F^{\epsilon})'(\bu^{\text{old}}), \bv_{j,k} \rangle,
\quad \bv_{j,k} \in V_{j,k}.
\end{equation}
Note that~\eqref{FAS} is equivalent to the following minimization problem:
\begin{equation}
\label{FAS_opt}
\tilde{\bw}_{j,k} = \operatornamewithlimits{\arg\min}_{\bw_{j,k} \in V_{j,k}}
\left\{
F^{\epsilon}(\Pi_j \bu^{\text{old}} + \bw_{j,k})
+\langle (F^{\epsilon})'(\bu^{\text{old}})-(F^{\epsilon})'(\Pi_j \bu^{\text{old}}), \bw_{j,k} \rangle
\right\}.
\end{equation}
Owing to the optimization structure in~\eqref{FAS_opt}, the FAS local problem~\eqref{FAS} can be interpreted as an inexact local solver in the sense of~\cite{LP:2025b,Park:2020}.
Thus, \cref{Alg:multilevel_FAS} is not covered directly by \cref{Thm:convergence}; its convergence justification relies on existing theory for inexact local solvers and FAS, such as~\cite{CHW:2020}.

In summary, \cref{Alg:multilevel_FAS} presents the recommended practical variant of the proposed parallel multilevel method for solving~\eqref{mixed_FEM_nearly}, equipped with both the backtracking line search and the FAS local problem~\eqref{FAS_opt}.

% Algorithm: Multilevel method with FAS
\begin{algorithm}
\caption{Parallel multilevel method for solving~\eqref{mixed_FEM_nearly} with FAS}
\begin{algorithmic}[]
\label{Alg:multilevel_FAS}
\STATE Choose $\bu^{(0)} \in V$ and $\tau^{(0)} > 0$.
\FOR{$n=0,1,2,\dots$}
    \FOR{$j = 1, 2, \dots, J$}
        \FOR{$k = 1, 2, \dots, n_j$}
            \STATE \resizebox{0.87\textwidth}{!}{$\displaystyle \bw_{j,k}^{(n+1)} = \operatornamewithlimits{\arg\min}_{\bw_{j,k} \in V_{j,k}}
\left\{
F^{\epsilon}(\Pi_j \bu^{(n)} + \bw_{j,k})
+\langle (F^{\epsilon})'(\bu^{(n)})-(F^{\epsilon})'(\Pi_j \bu^{(n)}), \bw_{j,k} \rangle
\right\}
            $}
        \ENDFOR
    \ENDFOR
    \STATE $\displaystyle \tau \leftarrow 2 \tau^{(n)}$
    \REPEAT
        \STATE $\displaystyle
        \bu^{(n+1)} = \bu^{(n)} + \tau \sum_{j=1}^J \sum_{k=1}^{n_j} \bw_{j,k}^{(n+1)}
        $
        \IF{$\displaystyle F^{\epsilon} (\bu^{(n)}) - F^{\epsilon} (\bu^{(n+1)}) < \tau \sum_{j=1}^J \sum_{k=1}^{n_j} \left( F^{\epsilon} (\bu^{(n)}) - F^{\epsilon} (\bu^{(n)} + \bw_{j,k}^{(n+1)}) \right)$}
            \STATE $\displaystyle \tau \leftarrow \frac{\tau}{2}$
        \ENDIF
    \UNTIL{$\displaystyle F^{\epsilon} (\bu^{(n)}) - F^{\epsilon} (\bu^{(n+1)}) \geq \tau \sum_{j=1}^J \sum_{k=1}^{n_j} \left( F^{\epsilon} (\bu^{(n)}) - F^{\epsilon} (\bu^{(n)} + \bw_{j,k}^{(n+1)}) \right)$}
    \STATE $\displaystyle \tau^{(n+1)} = \tau$
\ENDFOR
\end{algorithmic}
\end{algorithm}

% Section: Numerical results
\section{Numerical results}
\label{Sec:Numerical}
In this section, we present numerical results for the proposed multilevel methods~(\cref{Alg:multilevel,Alg:multilevel_backt,Alg:multilevel_FAS}) for solving the Darcy--Forchheimer model.
We also include results for the augmented Lagrangian method~(\cref{Alg:aug}) to support our theoretical findings.
All numerical experiments were conducted using MATLAB R2024b on a desktop equipped with an AMD Ryzen~5 5600X CPU (3.7~GHz, 6 cores), 40~GB RAM, and the Windows~10 Pro operating system.
The reported quantities related to parallel performance are implementation-independent operation-count proxies; we do not report wall-clock timings from a distributed-memory parallel implementation.

For the Darcy--Forchheimer model~\eqref{Forchheimer}, we set $\Omega = (0,1)^2 \subset \mathbb{R}^2$ and assume that $\bK = K \bI$ for some positive scalar $K$.
Following the settings commonly used in the literature on numerical methods for the Darcy--Forchheimer model, we take $\mu = 1$, $\rho = 1$, and $K = 1$ as in~\cite{LMS:2009,HCR:2018,PR:2012}, and vary the Forchheimer coefficient $\beta = 0, 10, 20, 30$, as in~\cite{HCR:2018,LMS:2009}.
Note that the case $\beta = 0$ corresponds to the linear Darcy model.

We consider the following two benchmark examples taken from~\cite{PR:2012}.

% Example : Example 1
\begin{example}
\label{Ex:Ex1}
The functions $\bf$ and $g$, and the corresponding exact solutions $\bu$ and $p$ of~\eqref{Forchheimer} are given by
\begin{align*}
\bf (x,y) &= \left(\frac{\mu}{\rho K} + \frac{\beta}{\rho} e^x \right)(e^x \sin y, e^x \cos y)^T + (y(1-2x)(1-y), x(1-x)(1 - 2y))^T, \\
g (x,y) &= 0, \quad
\bu (x,y) = ( e^x \sin y, e^x \cos y)^T, \quad
p (x,y) = xy(1-x)(1-y).
\end{align*}
\end{example}

% Example : Example 2
\begin{example}
\label{Ex:Ex2}
The functions $\bf$ and $g$, and the corresponding exact solutions $\bu$ and $p$ of~\eqref{Forchheimer} are given by
\begin{align*}
\bf (x,y) &= \left(\frac{\mu}{\rho K} + \frac{\beta}{\rho} \sqrt{x^2 e^{2y} + y^2 e^{2x}}\right)(x e^y, y e^x)^T + \pi ( \cos \pi x \sin \pi y, \sin \pi x \cos \pi y)^T, \\
g (x,y) &= e^x + e^y, \quad
\bu (x,y) = (x e^y, y e^x )^T, \quad
p (x,y) = \sin \pi x \sin \pi y.
\end{align*}
\end{example}

For the finite element discretization of the Darcy--Forchheimer model~\eqref{Forchheimer}, we use the lowest-order rectangular Raviart--Thomas-type conforming mixed finite element space~(see~\eqref{RT_mixed}) defined on a uniform $1/h \times 1/h$ grid, where $h$ denotes the side length of each rectangular element.
For the multilevel space decomposition~\eqref{space_decomposition} of the finite element space, we use the coarsening factor $\gamma = 2^{-1}$ unless stated otherwise.
Namely, the number of levels $J$ satisfies $h = 2^{-J}$.

Each local problem in the proposed multilevel methods is solved by the damped Newton method~\cite{BV:2004}, which is known to be globally convergent for convex optimization problems.
The stopping criterion is given in terms of the relative energy difference:
\begin{equation}
\label{stop_local}
\left| \frac{F_{j,k}^{\epsilon} (\bw_{j,k}^{(\ell+1)} ; \bu^{\text{old}}) - F_{j,k}^{\epsilon} (\bw_{j,k}^{(\ell)} ; \bu^{\text{old}}) }{ F_{j,k}^{\epsilon} (\bw_{j,k}^{(\ell+1)} ; \bu^{\text{old}}) } \right| < 10^{-5},
\end{equation}
where $\bw_{j,k}^{(\ell)}$ denotes the $\ell$th local Newton iterate, $\bu^{\text{old}}$ denotes the current global iterate, and $F_{j,k}^{\epsilon}(\cdot; \bu^{\text{old}}) \colon V_{j,k} \to \mathbb{R}$ denotes the corresponding local energy functional.

In all experiments and for all algorithms, we use zero initial guesses.
For the nearly semicoercive problem~\eqref{mixed_FEM_nearly}, we set $q_h = 0$ throughout.
In the linear case~($\beta = 0$), we employ \cref{Alg:multilevel} as a preconditioner for the conjugate gradient method~\cite{BPX:1990}.

% Subsection: Augmented Lagrangian method
\subsection{Augmented Lagrangian method}

% Table: Augmented Lagrangian method - Example 1
\begin{table}
\label{Table:aug_Ex1}
\centering
\caption{Number of iterations required by the augmented Lagrangian method~(\cref{Alg:aug}) to satisfy the stopping criterion~\eqref{stop_aug}, for various values of the augmented Lagrangian parameter $\epsilon$ and mesh size $h$~(\cref{Ex:Ex1}).}
\begin{tabular}{c|cccc c c|cccc}
\multicolumn{5}{c}{$\beta = 0$} & \quad \quad & \multicolumn{5}{c}{$\beta = 10$} \\
\cline{1-5} \cline{7-11}
\diagbox{$h$}{$\epsilon$} & $10^0$ & $10^{-1}$ & $10^{-2}$ & $10^{-3}$ && \diagbox{$h$}{$\epsilon$} & $10^0$ & $10^{-1}$ & $10^{-2}$ & $10^{-3}$ \\
\cline{1-5} \cline{7-11}
$2^{-6}$ & 3 & 2 & 1 & 1 && $2^{-6}$ & 12 & 4 & 2 & 2 \\
$2^{-7}$ & 3 & 2 & 1 & 1 && $2^{-7}$ & 12 & 4 & 2 & 2 \\
$2^{-8}$ & 3 & 2 & 1 & 1 && $2^{-8}$ & 12 & 4 & 2 & 2 \\
$2^{-9}$ & 3 & 2 & 1 & 1 && $2^{-9}$ & 12 & 4 & 2 & 2 \\
\cline{1-5} \cline{7-11}
\multicolumn{11}{c}{} \\
\multicolumn{5}{c}{$\beta = 20$} & \quad \quad & \multicolumn{5}{c}{$\beta = 30$} \\
\cline{1-5} \cline{7-11}
\diagbox{$h$}{$\epsilon$} & $10^0$ & $10^{-1}$ & $10^{-2}$ & $10^{-3}$ && \diagbox{$h$}{$\epsilon$} & $10^0$ & $10^{-1}$ & $10^{-2}$ & $10^{-3}$ \\
\cline{1-5} \cline{7-11}
$2^{-6}$ & 19 & 5 & 2 & 2 && $2^{-6}$ & 26 & 6 & 3 & 2 \\
$2^{-7}$ & 20 & 5 & 2 & 2 && $2^{-7}$ & 27 & 6 & 3 & 2 \\
$2^{-8}$ & 19 & 5 & 2 & 2 && $2^{-8}$ & 27 & 6 & 3 & 2 \\
$2^{-9}$ & 20 & 5 & 2 & 2 && $2^{-9}$ & 27 & 6 & 3 & 2 \\
\cline{1-5} \cline{7-11}
\end{tabular}
\end{table}

% Table: Augmented Lagrangian method - Example 2
\begin{table}
\label{Table:aug_Ex2}
\centering
\caption{Number of iterations required by the augmented Lagrangian method~(\cref{Alg:aug}) to satisfy the stopping criterion~\eqref{stop_aug}, for various values of the augmented Lagrangian parameter $\epsilon$ and mesh size $h$~(\cref{Ex:Ex2}).}
\begin{tabular}{c|cccc c c|cccc}
\multicolumn{5}{c}{$\beta = 0$} & \quad \quad & \multicolumn{5}{c}{$\beta = 10$} \\
\cline{1-5} \cline{7-11}
\diagbox{$h$}{$\epsilon$} & $10^0$ & $10^{-1}$ & $10^{-2}$ & $10^{-3}$ && \diagbox{$h$}{$\epsilon$} & $10^0$ & $10^{-1}$ & $10^{-2}$ & $10^{-3}$ \\
\cline{1-5} \cline{7-11}
$2^{-6}$ & 3 & 2 & 1 & 1 && $2^{-6}$ & 9 & 3 & 2 & 1 \\
$2^{-7}$ & 3 & 2 & 1 & 1 && $2^{-7}$ & 9 & 3 & 2 & 1 \\
$2^{-8}$ & 3 & 2 & 1 & 1 && $2^{-8}$ & 9 & 3 & 2 & 1 \\
$2^{-9}$ & 3 & 2 & 1 & 1 && $2^{-9}$ & 9 & 3 & 2 & 1 \\
\cline{1-5} \cline{7-11}
\multicolumn{11}{c}{} \\
\multicolumn{5}{c}{$\beta = 20$} & \quad \quad & \multicolumn{5}{c}{$\beta = 30$} \\
\cline{1-5} \cline{7-11}
\diagbox{$h$}{$\epsilon$} & $10^0$ & $10^{-1}$ & $10^{-2}$ & $10^{-3}$ && \diagbox{$h$}{$\epsilon$} & $10^0$ & $10^{-1}$ & $10^{-2}$ & $10^{-3}$ \\
\cline{1-5} \cline{7-11}
$2^{-6}$ & 15 & 4 & 2 & 2 && $2^{-6}$ & 20 & 5 & 2 & 2 \\
$2^{-7}$ & 15 & 4 & 2 & 2 && $2^{-7}$ & 20 & 5 & 2 & 2 \\
$2^{-8}$ & 15 & 4 & 2 & 2 && $2^{-8}$ & 20 & 5 & 2 & 2 \\
$2^{-9}$ & 15 & 4 & 2 & 2 && $2^{-9}$ & 20 & 5 & 2 & 2 \\
\cline{1-5} \cline{7-11}
\end{tabular}
\end{table}

Numerical results for the augmented Lagrangian method~(\cref{Alg:aug}) for various values of the augmented Lagrangian parameter $\epsilon$ and mesh size $h$ for solving \cref{Ex:Ex1,Ex:Ex2} are presented in \cref{Table:aug_Ex1,Table:aug_Ex2}, respectively.
For the outer iteration, we use the stopping criterion
\begin{equation}
\label{stop_aug}
\max \left\{ \frac{\| \bu_h^{(n)} - \bu_h \|_{\ell^2} }{\| \bu_h \|_{\ell^2}}, \frac{\| p_h^{(n)} - p_h \|_{\ell^2} }{\| p_h \|_{\ell^2}} \right\} < 10^{-3},
\end{equation}
where $\| \cdot \|_{\ell^2}$ denotes the standard $\ell^2$-norm of the vector of degrees of freedom.

To numerically verify the arbitrarily fast convergence of \cref{Alg:aug} stated in \cref{Thm:aug}, we solve each subproblem using a globally convergent solver from the literature.
Specifically, each subproblem in \cref{Alg:aug} is solved by the damped Newton method~\cite{BV:2004}, using the same type of relative-energy stopping criterion as in~\eqref{stop_local}, with tolerance $10^{-5}$.

In all cases, we observe that the number of iterations decreases as $\epsilon$ decreases, in agreement with \cref{Thm:aug}.
Moreover, the number of iterations remains uniformly bounded as the mesh is refined.
Hence, we conclude that \cref{Alg:aug} converges in very few iterations even for small values of $h$, provided that $\epsilon$ is sufficiently small.
These results support our claim that the mixed problem~\eqref{mixed_FEM_saddle} can be reduced to the optimization problem~\eqref{mixed_FEM_nearly}.

% Subsection: Backtracking line search and FAS
\subsection{Backtracking line search and FAS}

% Figure: Backtracking - Example 1
\begin{figure}
    \centering
    \includegraphics[width=\textwidth]{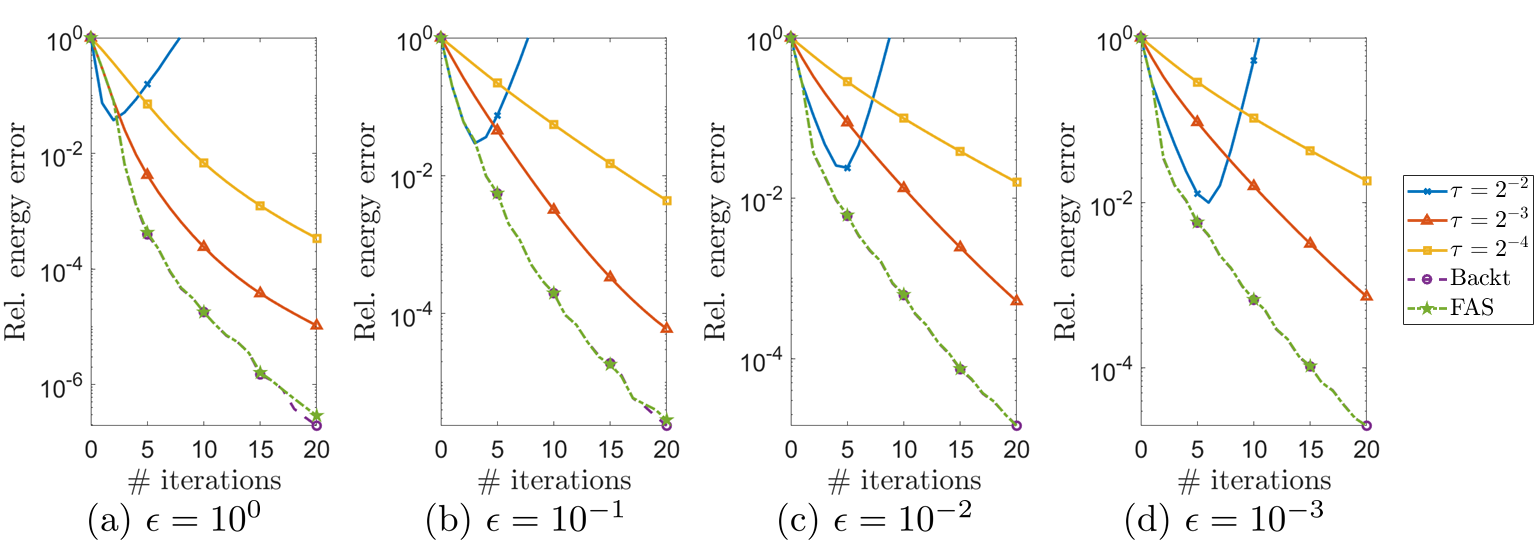}
    \caption{Convergence curves of the relative energy error $\frac{F^{\epsilon}(\bu^{(n)}) - \min F^{\epsilon}}{|\min F^{\epsilon}|}$ for the multilevel method~(\cref{Alg:multilevel}) with step sizes $\tau = 2^{-2}, 2^{-3}, 2^{-4}$, the backtracking variant~(\cref{Alg:multilevel_backt}), and the FAS variant~(\cref{Alg:multilevel_FAS}), for various values of the augmented Lagrangian parameter $\epsilon$. 
    Solid, dashed, and dash--dot lines correspond to Algorithms~\ref{Alg:multilevel}, \ref{Alg:multilevel_backt}, and \ref{Alg:multilevel_FAS}, respectively (\cref{Ex:Ex1}, $\beta = 30$, $h = 2^{-8}$).}
    \label{Fig:backt_Ex1}
\end{figure}

% Figure: Backtracking - Example 2
\begin{figure}
    \centering
    \includegraphics[width=\textwidth]{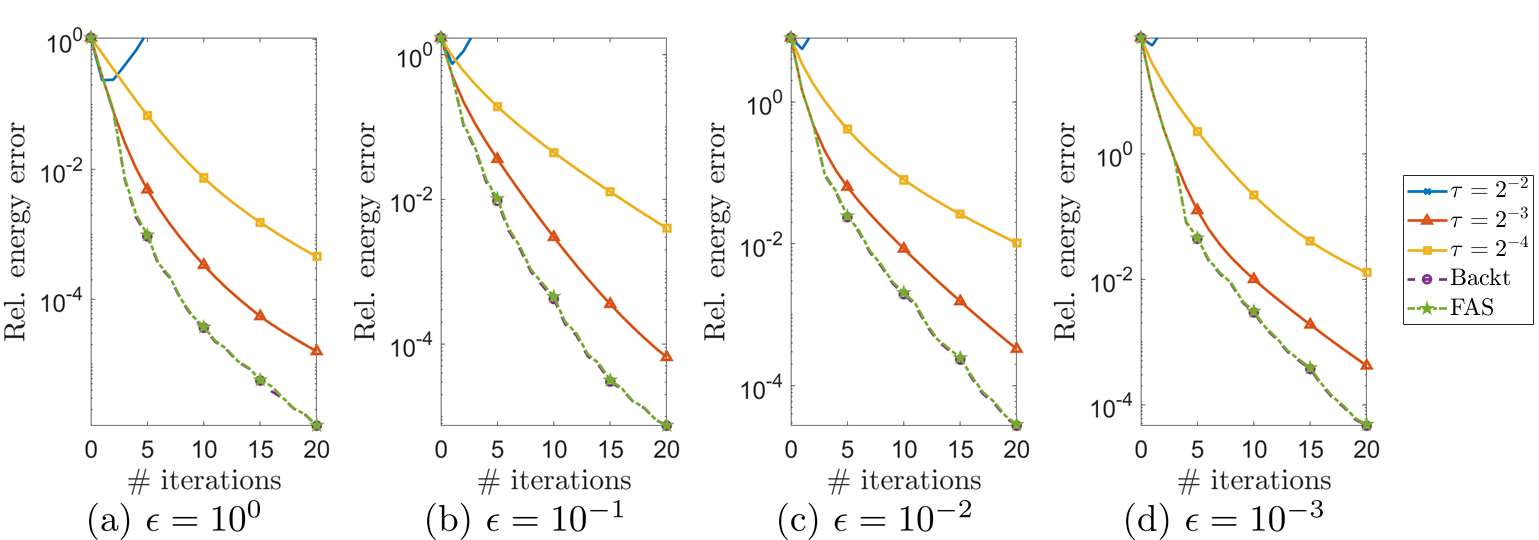}
    \caption{Convergence curves of the relative energy error $\frac{F^{\epsilon}(\bu^{(n)}) - \min F^{\epsilon}}{|\min F^{\epsilon}|}$ for the multilevel method~(\cref{Alg:multilevel}) with step sizes $\tau = 2^{-2}, 2^{-3}, 2^{-4}$, the backtracking variant~(\cref{Alg:multilevel_backt}), and the FAS variant~(\cref{Alg:multilevel_FAS}), for various values of the augmented Lagrangian parameter $\epsilon$. 
    Solid, dashed, and dash--dot lines correspond to Algorithms~\ref{Alg:multilevel}, \ref{Alg:multilevel_backt}, and \ref{Alg:multilevel_FAS}, respectively (\cref{Ex:Ex2}, $\beta = 30$, $h = 2^{-8}$).}
    \label{Fig:backt_Ex2}
\end{figure}

% Table: Backtracking
\begin{table}
\label{Table:backt}
\centering
\caption{Minimum step size $\min_{n \geq 0} \tau^{(n)}$ in the proposed multilevel methods with backtracking~(\cref{Alg:multilevel_backt,Alg:multilevel_FAS}) in the numerical experiments corresponding to \cref{Fig:backt_Ex1,Fig:backt_Ex2}.}
\begin{tabular}{c|cccc}
\cline{1-5}
& $\epsilon = 10^0$ & $\epsilon = 10^{-1}$ & $\epsilon = 10^{-2}$ & $\epsilon = 10^{-3}$ \\
\cline{1-5}
\cref{Alg:multilevel_backt} & $2^{-3}$ & $2^{-3}$ & $2^{-3}$ & $2^{-3}$ \\
\cref{Alg:multilevel_FAS} & $2^{-3}$ & $2^{-3}$ & $2^{-3}$ & $2^{-3}$ \\
\cline{1-5}
\multicolumn{4}{c}{(a) \cref{Ex:Ex1}} \\
\multicolumn{5}{c}{} \\
\cline{1-5}
& $\epsilon = 10^0$ & $\epsilon = 10^{-1}$ & $\epsilon = 10^{-2}$ & $\epsilon = 10^{-3}$ \\
\cline{1-5}
\cref{Alg:multilevel_backt} & $2^{-3}$ & $2^{-3}$ & $2^{-4}$ & $2^{-4}$ \\
\cref{Alg:multilevel_FAS} & $2^{-3}$ & $2^{-3}$ & $2^{-4}$ & $2^{-4}$ \\
\cline{1-5}
\multicolumn{4}{c}{(b) \cref{Ex:Ex2}}
\end{tabular}
\end{table}

To observe the effects of the backtracking strategy and FAS introduced in \cref{Alg:multilevel_backt,Alg:multilevel_FAS}, we present convergence curves for the multilevel methods \cref{Alg:multilevel,Alg:multilevel_backt,Alg:multilevel_FAS} in \cref{Fig:backt_Ex1,Fig:backt_Ex2}, where step sizes $\tau = 2^{-2}, 2^{-3}, 2^{-4}$ are used for \cref{Alg:multilevel}.
For \cref{Alg:multilevel}, we observe that when the step size is set to $\tau = 2^{-2}$, the algorithm diverges in almost all cases, indicating that overly large step sizes may lead to divergence.  
For smaller step sizes $\tau \leq 2^{-3}$, the algorithm converges in all cases, and larger step sizes among them result in faster convergence.  
This implies that, provided $\tau$ is sufficiently small to ensure convergence, larger values within the admissible range yield better performance.

In contrast, for \cref{Alg:multilevel_backt}, which adaptively determines the step size using the backtracking scheme, we consistently observe faster convergence rates across all test cases when compared to \cref{Alg:multilevel}.  
This demonstrates the effectiveness of the backtracking strategy in improving convergence behavior.

Finally, we observe that the convergence curves of \cref{Alg:multilevel_backt,Alg:multilevel_FAS} are nearly indistinguishable in all cases.
This indicates that using the FAS local solver~\eqref{FAS_opt} in the proposed multilevel method, as in \cref{Alg:multilevel_FAS}, does not deteriorate the convergence properties of the algorithm, while significantly reducing the computational cost.

Based on these observations, we focus exclusively on \cref{Alg:multilevel_FAS} in the remaining numerical experiments, as it achieves essentially the same convergence behavior as \cref{Alg:multilevel_backt} while reducing the local computational cost.

We also report the minimum step sizes $\min_{n \geq 0} \tau^{(n)}$ of \cref{Alg:multilevel_backt,Alg:multilevel_FAS} in \cref{Table:backt}.
We observe that the minimum step size does not decrease significantly as the augmented Lagrangian parameter $\epsilon$ becomes small.
This implies that the backtracking scheme is robust with respect to the nearly semicoercive behavior of the problem~\eqref{mixed_FEM_nearly}.

% Subsection: Dependence on h
\subsection{Dependence on \texorpdfstring{$\epsilon$, $h$}{epsilon and h}, and coarsening factor}

% Table: Dependence on parameters - Example 1
\begin{table}
\label{Table:dependence_Ex1}
\centering
\caption{Numerical results of the proposed multilevel method (\cref{Alg:multilevel_FAS}, coarsening factor $\gamma = 2^{-1}$) satisfying the stopping criterion~\eqref{stop_multilevel}, for various values of the augmented Lagrangian parameter $\epsilon$ and mesh size $h$ (\cref{Ex:Ex1}).}
\resizebox{0.99\textwidth}{!}{
\begin{tabular}{c|cccc|cccc|cccc}
\multicolumn{13}{c}{$\beta = 10$} \\
\cline{1-13}
 & \multicolumn{4}{c|}{outer iterations} &
 \multicolumn{4}{c|}{local Hessian assemblies/solves} & \multicolumn{4}{c}{parallel patch inner solves} \\
\cline{2-13}
\diagbox{$h$}{$\epsilon$} & 
$10^0$ & $10^{-1}$ & $10^{-2}$ & $10^{-3}$ &
$10^0$ & $10^{-1}$ & $10^{-2}$ & $10^{-3}$ & 
$10^0$ & $10^{-1}$ & $10^{-2}$ & $10^{-3}$ \\
\cline{1-13}
$2^{-6}$ & 
5 & 9 & 9 & 9 &     
1.1E5 & 1.6E5 & 1.5E5 & 1.4E5 &  
%114,563 & 162,229 & 148,444 & 140,373 &   
41 & 75 & 69 & 69 \\
$2^{-7}$ & 
5 & 9 & 9 & 9 &     
4.5E5 & 6.2E5 & 5.7E5 & 5.4E5 &
%452,625 & 615,374 & 566,813 & 539,249 &   
43 & 78 & 72 & 72 \\
$2^{-8}$ &  
6 & 10 & 10 & 10 &      
1.8E6 & 2.4E6 & 2.2E6 & 2.1E6 & 
%1,773,415 & 2,356,476 & 2,175,846 & 2,149,611 &    
49 & 90 & 83 & 84 \\
$2^{-9}$ &  
6 & 10 & 10 & 10 &     
6.4E6 & 8.4E6 & 8.0E6 & 7.7E6 &
%6,424,876 & 8,412,623 & 7,959,699 & 7,718,950 &    
53 & 93 & 93 & 87 \\
\cline{1-13}
& \multicolumn{4}{c|}{global reductions} & \multicolumn{4}{c|}{global energy evaluations} & \multicolumn{4}{c}{global gradient evaluations} \\
\cline{2-13}
\diagbox{$h$}{$\epsilon$} & 
$10^0$ & $10^{-1}$ & $10^{-2}$ & $10^{-3}$ &
$10^0$ & $10^{-1}$ & $10^{-2}$ & $10^{-3}$ & 
$10^0$ & $10^{-1}$ & $10^{-2}$ & $10^{-3}$ \\
\cline{1-13}
$2^{-6}$ & 
5 & 9 & 9 & 9 &     
11 & 19 & 20 & 19 &   
5 & 9 & 9 & 9 \\
$2^{-7}$ &
5 & 9 & 9 & 9 &     
12 & 19 & 19 & 20 &   
5 & 9 & 9 & 9 \\
$2^{-8}$ &  
6 & 10 & 10 & 10 &      
13 & 21 & 21 & 22 &    
6 & 10 & 10 & 10 \\
$2^{-9}$ & 
6 & 10 & 10 & 10 &     
14 & 21 & 22 & 23 &   
6 & 10 & 10 & 10 \\
\cline{1-13}
\multicolumn{13}{c}{} \\
\multicolumn{13}{c}{$\beta = 20$} \\
\cline{1-13}
 & \multicolumn{4}{c|}{outer iterations} &
 \multicolumn{4}{c|}{local Hessian assemblies/solves} & \multicolumn{4}{c}{parallel patch inner solves} \\
\cline{2-13}
\diagbox{$h$}{$\epsilon$} & 
$10^0$ & $10^{-1}$ & $10^{-2}$ & $10^{-3}$ &
$10^0$ & $10^{-1}$ & $10^{-2}$ & $10^{-3}$ & 
$10^0$ & $10^{-1}$ & $10^{-2}$ & $10^{-3}$ \\
\cline{1-13}
$2^{-6}$ & 
4 & 8 & 8 & 9 &     
8.1E4 & 1.5E5 & 1.4E5 & 1.4E5 &
%80,509 & 153,391 & 140,947 & 142,903 &   
34 & 69 & 65 & 70 \\
$2^{-7}$ & 
5 & 8 & 9 & 9 &      
3.5E5 & 5.6E5 & 5.7E5 & 5.3E5 &
%351,134 & 559,822 & 566,087 & 526,593 &    
39 & 73 & 76 & 72 \\
$2^{-8}$ & 
5 & 8 & 10 & 9 &      
1.3E6 & 2.0E6 & 2.1E6 & 1.9E6 &
%1,366,505 & 1,953,740 & 2,123,644 & 1,893,444 &    
42 & 73 & 87 & 76 \\
$2^{-9}$ & 
5 & 9 & 10 & 10 &      
5.5E6 & 7.7E6 & 7.9E6 & 7.6E6 &
%5,458,361 & 7,717,525 & 7,929,868 & 7,619,706 &    
43 & 86 & 92 & 90 \\
\cline{1-13}
& \multicolumn{4}{c|}{global reductions} & \multicolumn{4}{c|}{global energy evaluations} & \multicolumn{4}{c}{global gradient evaluations} \\
\cline{2-13}
\diagbox{$h$}{$\epsilon$} & 
$10^0$ & $10^{-1}$ & $10^{-2}$ & $10^{-3}$ &
$10^0$ & $10^{-1}$ & $10^{-2}$ & $10^{-3}$ & 
$10^0$ & $10^{-1}$ & $10^{-2}$ & $10^{-3}$ \\
\cline{1-13}
$2^{-6}$ & 
4 & 8 & 8 & 9 &     
9 & 17 & 17 & 19 &   
4 & 8 & 8 & 9 \\
$2^{-7}$ &  
5 & 8 & 9 & 9 &      
11 & 18 & 19 & 20 &    
5 & 8 & 9 & 9 \\
$2^{-8}$ & 
5 & 8 & 10 & 9 &      
11 & 17 & 22 & 19 &    
5 & 8 & 10 & 9 \\
$2^{-9}$ &  
5 & 9 & 10 & 10 &      
11 & 19 & 21 & 21 &    
5 & 9 & 10 & 10 \\
\cline{1-13}
\multicolumn{13}{c}{} \\
\multicolumn{13}{c}{$\beta = 30$} \\
\cline{1-13}
 & \multicolumn{4}{c|}{outer iterations} &
 \multicolumn{4}{c|}{local Hessian assemblies/solves} & \multicolumn{4}{c}{parallel patch inner solves} \\
\cline{2-13}
\diagbox{$h$}{$\epsilon$} & 
$10^0$ & $10^{-1}$ & $10^{-2}$ & $10^{-3}$ &
$10^0$ & $10^{-1}$ & $10^{-2}$ & $10^{-3}$ & 
$10^0$ & $10^{-1}$ & $10^{-2}$ & $10^{-3}$ \\
\cline{1-13}
$2^{-6}$ &
5 & 7 & 8 & 9 &     
1.3E5 & 1.5E5 & 1.5E5 & 1.5E5 &
%129,206 & 145,737 & 146,150 & 148,292 &   
48 & 62 & 66 & 72 \\
$2^{-7}$ & 
5& 7 & 9 & 9 &      
4.4E5 & 5.2E5 & 5.6E5 & 5.4E5 &
%442,902 & 520,296 & 564,033 & 538,521 &    
49 & 63 & 75 & 74 \\
$2^{-8}$ &
5 & 8 & 9 & 9 &      
1.5E6 & 1.9E6 & 2.0E6 & 1.9E6 &
%1,509,365 & 1,939,160 & 2,037,583 & 1,923,791 &    
47 & 73 & 81 & 76 \\
$2^{-9}$ & 
5 & 8 & 10 & 10 &      
5.8E6 & 6.9E6 & 7.7E6 & 7.4E6 &
%5,806,022 & 6,908,834 & 7,706,298 & 7,380,818 &    
47 & 77 & 94 & 89 \\
\cline{1-13}
& \multicolumn{4}{c|}{global reductions} & \multicolumn{4}{c|}{global energy evaluations} & \multicolumn{4}{c}{global gradient evaluations} \\
\cline{2-13}
\diagbox{$h$}{$\epsilon$} & 
$10^0$ & $10^{-1}$ & $10^{-2}$ & $10^{-3}$ &
$10^0$ & $10^{-1}$ & $10^{-2}$ & $10^{-3}$ & 
$10^0$ & $10^{-1}$ & $10^{-2}$ & $10^{-3}$ \\
\cline{1-13}
$2^{-6}$ & 
5 & 7 & 8 & 9 &     
11 & 16 & 17 & 19 &   
5 & 7 & 8 & 9 \\
$2^{-7}$ & 
5 & 7 & 9 & 9 &      
11 & 15 & 20 & 20 &    
5 & 7 & 9 & 9 \\
$2^{-8}$ &  
5 & 8 & 9 & 9 &      
11 & 17 & 19 & 19 &    
5 & 8 & 9 & 9 \\
$2^{-9}$ &  
5 & 8 & 10 & 10 &      
12 & 18 & 22 & 21 &    
5 & 8 & 10 & 10 \\
\cline{1-13}
\end{tabular}
}
\end{table}

% Table: Dependence on parameters - Example 2
\begin{table}
\label{Table:dependence_Ex2}
\centering
\caption{Numerical results of the proposed multilevel method (\cref{Alg:multilevel_FAS}, coarsening factor $\gamma = 2^{-1}$) satisfying the stopping criterion~\eqref{stop_multilevel}, for various values of the augmented Lagrangian parameter $\epsilon$ and mesh size $h$ (\cref{Ex:Ex2}).}
\resizebox{0.99\textwidth}{!}{
\begin{tabular}{c|cccc|cccc|cccc}
\multicolumn{13}{c}{$\beta = 10$} \\
\cline{1-13}
 & \multicolumn{4}{c|}{outer iterations} &
 \multicolumn{4}{c|}{local Hessian assemblies/solves} & \multicolumn{4}{c}{parallel patch inner solves} \\
\cline{2-13}
\diagbox{$h$}{$\epsilon$} & 
$10^0$ & $10^{-1}$ & $10^{-2}$ & $10^{-3}$ &
$10^0$ & $10^{-1}$ & $10^{-2}$ & $10^{-3}$ & 
$10^0$ & $10^{-1}$ & $10^{-2}$ & $10^{-3}$ \\
\cline{1-13}
$2^{-6}$ & 
6 & 10 & 11 & 11 &     
1.3E5 & 1.6E5 & 1.8E5 & 1.9E5 &
%125,949 & 164,193 & 180,866 & 193,673 &   
61 & 105 & 117 & 124 \\
$2^{-7}$ & 
6 & 10 & 12 & 12 &     
4.5E5 & 6.0E5 & 7.0E5 & 7.7E5 &
%446,839 & 601,583 & 697,427 & 766,253 &   
64 & 113 & 139 & 149 \\
$2^{-8}$ & 
6 & 11 & 13 & 13 &      
1.5E6 & 2.2E6 & 2.7E6 & 2.9E6 &
%1,455,949 & 2,229,142 & 2,671,876 & 2,931,963 &    
65 & 124 & 154 & 170 \\
$2^{-9}$ & 
7 & 11 & 13 & 14 &      
5.3E6 & 7.6E6 & 9.5E6 & 1.1E7 &
%5,297,807 & 7,588,153 & 9,538,473 & 11,053,675 &    
78 & 128 & 153 & 177 \\
\cline{1-13}
& \multicolumn{4}{c|}{global reductions} & \multicolumn{4}{c|}{global energy evaluations} & \multicolumn{4}{c}{global gradient evaluations} \\
\cline{2-13}
\diagbox{$h$}{$\epsilon$} & 
$10^0$ & $10^{-1}$ & $10^{-2}$ & $10^{-3}$ &
$10^0$ & $10^{-1}$ & $10^{-2}$ & $10^{-3}$ & 
$10^0$ & $10^{-1}$ & $10^{-2}$ & $10^{-3}$ \\
\cline{1-13}
$2^{-6}$ & 
6 & 10 & 11 & 11 &     
13 & 21 & 24 & 23 &   
6 & 10 & 11 & 11 \\
$2^{-7}$ &
6 & 10 & 12 & 12 &      
14 & 22 & 25 & 25 &    
6 & 10 & 12 & 12 \\
$2^{-8}$ &  
6 & 11 & 13 & 13 &      
13 & 25 & 27 & 27 &    
6 & 11 & 13 & 13 \\
$2^{-9}$ &  
7 & 11 & 13 & 14 &      
15 & 24 & 27 & 30 &    
7 & 11 & 13 & 14 \\
\cline{1-13}
\multicolumn{13}{c}{} \\
\multicolumn{13}{c}{$\beta = 20$} \\
\cline{1-13}
 & \multicolumn{4}{c|}{outer iterations} &
 \multicolumn{4}{c|}{local Hessian assemblies/solves} & \multicolumn{4}{c}{parallel patch inner solves} \\
\cline{2-13}
\diagbox{$h$}{$\epsilon$} & 
$10^0$ & $10^{-1}$ & $10^{-2}$ & $10^{-3}$ &
$10^0$ & $10^{-1}$ & $10^{-2}$ & $10^{-3}$ & 
$10^0$ & $10^{-1}$ & $10^{-2}$ & $10^{-3}$ \\
\cline{1-13}
$2^{-6}$ & 
5 & 8 & 10 & 11 &     
1.1E5 & 1.5E5 & 1.7E5 & 1.9E5 &
%110,271 & 145,992 & 173,531 & 191,872 &   
52 & 87 & 112 & 127 \\
$2^{-7}$ &  
5 & 9 & 11 & 12 &      
3.9E5 & 5.7E5 & 6.7E5 & 7.6E5 &
%392,663 & 566,580 & 668,746 & 761,734 &    
54 & 104 & 129 & 149 \\
$2^{-8}$ &  
6 & 10 & 12 & 13 &      
1.5E6 & 2.1E6 & 2.5E6 & 2.9E6 &
%1,475,462 & 2,109,964 & 2,518,200 & 2,875,878 &   
65 & 116 & 158 & 171 \\
$2^{-9}$ &  
6 & 10 & 13 & 14 &      
4.9E6 & 6.9E6 & 9.2E6 & 1.1E7 &
%4,935,567 & 6,890,427 & 9,184,033 & 10,808,434 &   
70 & 116 & 158 & 171 \\
\cline{1-13}
& \multicolumn{4}{c|}{global reductions} & \multicolumn{4}{c|}{global energy evaluations} & \multicolumn{4}{c}{global gradient evaluations} \\
\cline{2-13}
\diagbox{$h$}{$\epsilon$} & 
$10^0$ & $10^{-1}$ & $10^{-2}$ & $10^{-3}$ &
$10^0$ & $10^{-1}$ & $10^{-2}$ & $10^{-3}$ & 
$10^0$ & $10^{-1}$ & $10^{-2}$ & $10^{-3}$ \\
\cline{1-13}
$2^{-6}$ & 
5 & 8 & 10 & 11 &   
11 & 17 & 21 & 23 &  
5 & 8 & 10 & 11 \\
$2^{-7}$ & 
5 & 9 & 11 & 12 &     
11& 19 & 24 & 25 &   
5 & 9 & 11 & 12 \\
$2^{-8}$ &  
6 & 10 & 12 & 13 & 
13 & 22 & 25 & 27 &
6 & 10 & 12 & 13 \\
$2^{-9}$ & 
6 & 10 & 13 & 14 &    
13 & 21 & 27 & 30 &   
6 & 10 & 13 & 14 \\
\cline{1-13}
\multicolumn{13}{c}{} \\
\multicolumn{13}{c}{$\beta = 30$} \\
\cline{1-13}
 & \multicolumn{4}{c|}{outer iterations} &
 \multicolumn{4}{c|}{local Hessian assemblies/solves} & \multicolumn{4}{c}{parallel patch inner solves} \\
\cline{2-13}
\diagbox{$h$}{$\epsilon$} & 
$10^0$ & $10^{-1}$ & $10^{-2}$ & $10^{-3}$ &
$10^0$ & $10^{-1}$ & $10^{-2}$ & $10^{-3}$ & 
$10^0$ & $10^{-1}$ & $10^{-2}$ & $10^{-3}$ \\
\cline{1-13}
$2^{-6}$ & 
5 & 8 & 10 & 11 &   
1.2E5 & 1.5E5 & 1.7E5 & 1.9E5 &
%118,466 & 147,094 & 172,154 & 187,661 &  
51 & 87 & 110 & 122 \\
$2^{-7}$ & 
5 & 8 & 11 & 12 &     
4.1E5 & 5.3E5 & 6.6E5 & 7.6E5 &
%409,457 & 530,738 & 657,032 & 755,883 &   
53 & 93 & 130 & 155 \\
$2^{-8}$ & 
5 & 9 & 12 & 13 &    
1.4E6 & 2.0E6 & 2.5E6 & 2.8E6 &
%1,407,907 & 1,965,694 & 2,478,705 & 2,841,865 &  
56 & 106 & 144 & 164 \\
$2^{-9}$ &  
5 & 9 & 12 & 14 &     
4.6E6 & 6.4E6 & 8.6E6 & 1.1E7 &
%4,593,929 & 6,376,381 & 8,611,581 & 10,636,415 &  
58 & 107 & 145 & 173 \\
\cline{1-13}
& \multicolumn{4}{c|}{global reductions} & \multicolumn{4}{c|}{global energy evaluations} & \multicolumn{4}{c}{global gradient evaluations} \\
\cline{2-13}
\diagbox{$h$}{$\epsilon$} & 
$10^0$ & $10^{-1}$ & $10^{-2}$ & $10^{-3}$ &
$10^0$ & $10^{-1}$ & $10^{-2}$ & $10^{-3}$ & 
$10^0$ & $10^{-1}$ & $10^{-2}$ & $10^{-3}$ \\
\cline{1-13}
$2^{-6}$ & 
5 & 8 & 10 & 11 &    
11 & 17 & 21 & 23 &  
5 & 8 & 10 & 11 \\
$2^{-7}$ & 
5 & 8 & 11 & 12 &    
11 & 18 & 24 & 26 &  
5 & 8 & 11 & 12 \\
$2^{-8}$ &  
5 & 9 & 12 & 13 &    
12 & 21 & 25 & 27 &  
5 & 9 & 12 & 13 \\
$2^{-9}$ & 
5 & 9 & 12 & 14 &   
11 & 20 & 26 & 29 &   
5 & 9 & 12 & 14 \\
\cline{1-13}
\end{tabular}
}
\end{table}

% Table: Dependence on parameters (different coarsening factor) - Example 1
\begin{table}
\label{Table:dependence_Ex1_coarsening}
\centering
\caption{Numerical results of the proposed multilevel method (\cref{Alg:multilevel_FAS}, coarsening factor $\gamma = 2^{-2}$) satisfying the stopping criterion~\eqref{stop_multilevel}, for various values of the augmented Lagrangian parameter $\epsilon$ and mesh size $h$ (\cref{Ex:Ex1}).}
\resizebox{0.99\textwidth}{!}{
\begin{tabular}{c|cccc|cccc|cccc}
\multicolumn{13}{c}{$\beta = 10$} \\
\cline{1-13}
 & \multicolumn{4}{c|}{outer iterations} &
 \multicolumn{4}{c|}{local Hessian assemblies/solves} & \multicolumn{4}{c}{parallel patch inner solves} \\
\cline{2-13}
\diagbox{$h$}{$\epsilon$} & 
$10^0$ & $10^{-1}$ & $10^{-2}$ & $10^{-3}$ &
$10^0$ & $10^{-1}$ & $10^{-2}$ & $10^{-3}$ & 
$10^0$ & $10^{-1}$ & $10^{-2}$ & $10^{-3}$ \\
\cline{1-13}
$2^{-6}$ & 
9 & 15 & 17 & 17 &    
1.4E5 & 2.1E5 & 2.0E5 & 1.9E5 &
%142,476 & 207,509 & 203,758 & 194,225 &  
69 & 123 & 135 & 132 \\
$2^{-7}$ &
7 & 13 & 13 & 12 &    
4.4E5 & 6.1E5 & 5.7E5 & 5.2E5 &
%443,230 & 610,696 & 570,941 & 517,818 &  
56 & 89 & 90 & 81 \\
$2^{-8}$ & 
9 & 15 & 17 & 18 &   
1.9E6 & 2.6E6 & 2.7E6 & 2.7E6 &
%1,930,080 & 2,630,703 & 2,679,679 & 2,706,242 &  
77 & 131 & 141 & 145 \\
$2^{-9}$ & 
8 & 13 & 14 & 14 &   
6.1E6 & 7.5E6 & 7.7E6 & 7.5E6 &
%6,053,742 & 7,549,985 & 7,686,867 & 7,456,289 &   
66 & 95 & 100 & 96 \\
\cline{1-13}
& \multicolumn{4}{c|}{global reductions} & \multicolumn{4}{c|}{global energy evaluations} & \multicolumn{4}{c}{global gradient evaluations} \\
\cline{2-13}
\diagbox{$h$}{$\epsilon$} & 
$10^0$ & $10^{-1}$ & $10^{-2}$ & $10^{-3}$ &
$10^0$ & $10^{-1}$ & $10^{-2}$ & $10^{-3}$ & 
$10^0$ & $10^{-1}$ & $10^{-2}$ & $10^{-3}$ \\
\cline{1-13}
$2^{-6}$ & 
9 & 15 & 17 & 17 &   
18 & 30 & 35 & 34 &  
9 & 15 & 17 & 17 \\
$2^{-7}$ & 
7 & 13 & 13 & 12 &   
16 & 27 & 27 & 24 &   
7 & 13 & 13 & 12 \\
$2^{-8}$ & 
9 & 15 & 17 & 18 &   
18 & 30 & 34 & 36 &  
9 & 15 & 17 & 18 \\
$2^{-9}$ & 
8 & 13 & 14 & 14 &    
17 & 26 & 28 & 28 &  
8 & 13 & 14 & 14 \\
\cline{1-13}
\multicolumn{13}{c}{} \\
\multicolumn{13}{c}{$\beta = 20$} \\
\cline{1-13}
 & \multicolumn{4}{c|}{outer iterations} &
 \multicolumn{4}{c|}{local Hessian assemblies/solves} & \multicolumn{4}{c}{parallel patch inner solves} \\
\cline{2-13}
\diagbox{$h$}{$\epsilon$} & 
$10^0$ & $10^{-1}$ & $10^{-2}$ & $10^{-3}$ &
$10^0$ & $10^{-1}$ & $10^{-2}$ & $10^{-3}$ & 
$10^0$ & $10^{-1}$ & $10^{-2}$ & $10^{-3}$ \\
\cline{1-13}
$2^{-6}$ & 
7 & 13 & 17 & 17 &   
1.0E5 & 1.9E5 & 2.1E5 & 2.0E5 &
%101,319 & 190,543 & 207,723 & 197,501 &  
54 & 109 & 135 & 137 \\
$2^{-7}$ &  
6 & 12 & 13 & 12 &   
3.7E5 & 5.6E5 & 5.7E5 & 5.1E5 &
%366,654 & 564,813 & 567,892 & 509,187 &  
52 & 86 & 92 & 81 \\
$2^{-8}$ & 
8 & 15 & 16 & 17 &   
1.6E6 & 2.5E6 & 2.5E6 & 2.5E6 &
%1,563,498 & 2,524,462 & 2,496,791 & 2,533,875 &  
62 & 130 & 135 & 142 \\
$2^{-9}$ & 
7 & 12 & 14 & 14 &   
5.9E6 & 7.3E6 & 7.7E6 & 7.5E6 &
%5,875,266 & 7,325,688 & 7,768,491 & 7,492,637 &   
60 & 95 & 105 & 102 \\
\cline{1-13}
& \multicolumn{4}{c|}{global reductions} & \multicolumn{4}{c|}{global energy evaluations} & \multicolumn{4}{c}{global gradient evaluations} \\
\cline{2-13}
\diagbox{$h$}{$\epsilon$} & 
$10^0$ & $10^{-1}$ & $10^{-2}$ & $10^{-3}$ &
$10^0$ & $10^{-1}$ & $10^{-2}$ & $10^{-3}$ & 
$10^0$ & $10^{-1}$ & $10^{-2}$ & $10^{-3}$ \\
\cline{1-13}
$2^{-6}$ & 
7 & 13 & 17 & 17 &  
14 & 27 & 34 & 34 &  
7 & 13 & 17 & 17 \\
$2^{-7}$ & 
6 & 12 & 13 & 12 &    
12 & 24 & 28 & 24 &  
6 & 12 & 13 & 12 \\
$2^{-8}$ & 
8 & 15 & 16 & 17 &   
16 & 30 & 34 & 35 &  
8 & 15 & 16 & 17 \\
$2^{-9}$ & 
7 & 12 & 14 & 14 &   
15 & 26 & 28 & 29 &  
7 & 12 & 14 & 14 \\
\cline{1-13}
\multicolumn{13}{c}{} \\
\multicolumn{13}{c}{$\beta = 30$} \\
\cline{1-13}
 & \multicolumn{4}{c|}{outer iterations} &
 \multicolumn{4}{c|}{local Hessian assemblies/solves} & \multicolumn{4}{c}{parallel patch inner solves} \\
\cline{2-13}
\diagbox{$h$}{$\epsilon$} & 
$10^0$ & $10^{-1}$ & $10^{-2}$ & $10^{-3}$ &
$10^0$ & $10^{-1}$ & $10^{-2}$ & $10^{-3}$ & 
$10^0$ & $10^{-1}$ & $10^{-2}$ & $10^{-3}$ \\
\cline{1-13}
$2^{-6}$ & 
6 & 12 & 16 & 17 &  
8.8E4 & 1.8E5 & 2.1E5 & 2.0E5 &
%88,126 & 182,095 & 205,398 & 201,565 &  
49 & 98 & 130 & 137 \\
$2^{-7}$ &  
6 & 10 & 13 & 11 &    
3.7E5 & 5.1E5 & 5.7E5 & 4.8E5 &
%370,151 & 505,974 & 568,491 & 479,838 &  
52 & 76 & 89 & 77 \\
$2^{-8}$ & 
6 & 12 & 16 & 17 &    
1.3E6 & 2.2E6 & 2.5E6 & 2.5E6 &
%1,323,168 & 2,158,707 & 2,522,733 & 2,538,983 &  
54 & 104 & 138 & 143 \\
$2^{-9}$ & 
6 & 10 & 13 & 13 &    
5.1E6 & 6.1E6 & 7.0E6 & 6.8E6 &
%5,111,476 & 6,089,603 & 6,954,426 & 6,770,122 &   
55 & 82 & 94 & 96 \\
\cline{1-13}
& \multicolumn{4}{c|}{global reductions} & \multicolumn{4}{c|}{global energy evaluations} & \multicolumn{4}{c}{global gradient evaluations} \\
\cline{2-13}
\diagbox{$h$}{$\epsilon$} & 
$10^0$ & $10^{-1}$ & $10^{-2}$ & $10^{-3}$ &
$10^0$ & $10^{-1}$ & $10^{-2}$ & $10^{-3}$ & 
$10^0$ & $10^{-1}$ & $10^{-2}$ & $10^{-3}$ \\
\cline{1-13}
$2^{-6}$ & 
6 & 12 & 16 & 17 &   
13 & 25 & 33 & 35 &  
6 & 12 & 16 & 17 \\
$2^{-7}$ &  
6 & 10 & 13 & 11 &   
12 & 20 & 26 & 22 &  
6 & 10 & 13 & 11 \\
$2^{-8}$ & 
6 & 12 & 16 & 17 &   
12 & 24 & 32 & 36 &   
6 & 12 & 16 & 17 \\
$2^{-9}$ & 
6 & 10 & 13 & 13 &   
13 & 20 & 27 & 28 &  
6 & 10 & 13 & 13 \\
\cline{1-13}
\end{tabular}
}
\end{table}

% Table: Dependence on parameters (different coarsening factor) - Example 2
\begin{table}
\label{Table:dependence_Ex2_coarsening}
\centering
\caption{Numerical results of the proposed multilevel method (\cref{Alg:multilevel_FAS}, coarsening factor $\gamma = 2^{-2}$) satisfying the stopping criterion~\eqref{stop_multilevel}, for various values of the augmented Lagrangian parameter $\epsilon$ and mesh size $h$ (\cref{Ex:Ex2}).}
\resizebox{0.99\textwidth}{!}{
\begin{tabular}{c|cccc|cccc|cccc}
\multicolumn{13}{c}{$\beta = 10$} \\
\cline{1-13}
 & \multicolumn{4}{c|}{outer iterations} &
 \multicolumn{4}{c|}{local Hessian assemblies/solves} & \multicolumn{4}{c}{parallel patch inner solves} \\
\cline{2-13}
\diagbox{$h$}{$\epsilon$} & 
$10^0$ & $10^{-1}$ & $10^{-2}$ & $10^{-3}$ &
$10^0$ & $10^{-1}$ & $10^{-2}$ & $10^{-3}$ & 
$10^0$ & $10^{-1}$ & $10^{-2}$ & $10^{-3}$ \\
\cline{1-13}
$2^{-6}$ & 
9 & 16 & 19 & 20 &  
1.4E5 & 2.2E5 & 2.6E5 & 2.9E5 &
%142,636 & 216,704 & 258,652 & 292,428 &   
87 & 161 & 192 & 224 \\
$2^{-7}$ &
8 & 11 & 15 & 19 &   
4.2E5 & 5.4E5 & 7.8E5 & 1.0E6 &
%420,040 & 544,009 & 778,547 & 1,041,028 &  
79 & 117 & 171 & 236 \\
$2^{-8}$ & 
10 & 17 & 20 & 21 &   
1.9E6 & 2.9E6 & 3.6E6 & 4.1E6 &
%1,852,358 & 2,900,285 & 3,614,217 & 4,138,797 &  
103 & 177 & 218 & 243 \\
$2^{-9}$ &  
8 & 12 & 15 & 19 &   
4.9E6 & 7.2E6 & 9.9E6 & 1.4E7 &
%4,862,082 & 7,184,016 & 9,901,150 & 13,593,848 &  
86 & 134 & 187 & 257 \\
\cline{1-13}
& \multicolumn{4}{c|}{global reductions} & \multicolumn{4}{c|}{global energy evaluations} & \multicolumn{4}{c}{global gradient evaluations} \\
\cline{2-13}
\diagbox{$h$}{$\epsilon$} & 
$10^0$ & $10^{-1}$ & $10^{-2}$ & $10^{-3}$ &
$10^0$ & $10^{-1}$ & $10^{-2}$ & $10^{-3}$ & 
$10^0$ & $10^{-1}$ & $10^{-2}$ & $10^{-3}$ \\
\cline{1-13}
$2^{-6}$ & 
9 & 16 & 19 & 20 &  
20 & 32 & 38 & 42 &  
9 & 16 & 19 & 20 \\
$2^{-7}$ & 
8 & 11 & 15 & 19 &    
16 & 24 & 30 & 38 &  
8 & 11 & 15 & 19 \\
$2^{-8}$ & 
10 & 17 & 20 & 21 &    
20 & 34 & 40 & 43 &  
10 & 17 & 20 & 21 \\
$2^{-9}$ & 
8 & 12 & 15 & 19 &   
18 & 24 & 31 & 39 &  
8 & 12 & 15 & 19 \\
\cline{1-13}
\multicolumn{13}{c}{} \\
\multicolumn{13}{c}{$\beta = 20$} \\
\cline{1-13}
 & \multicolumn{4}{c|}{outer iterations} &
 \multicolumn{4}{c|}{local Hessian assemblies/solves} & \multicolumn{4}{c}{parallel patch inner solves} \\
\cline{2-13}
\diagbox{$h$}{$\epsilon$} & 
$10^0$ & $10^{-1}$ & $10^{-2}$ & $10^{-3}$ &
$10^0$ & $10^{-1}$ & $10^{-2}$ & $10^{-3}$ & 
$10^0$ & $10^{-1}$ & $10^{-2}$ & $10^{-3}$ \\
\cline{1-13}
$2^{-6}$ & 
7 & 14 & 19 & 19 &   
1.2E5 & 2.0E5 & 2.6E5 & 2.8E5 &
%115,105 & 198,453 & 256,280 & 276,927 &  
74 & 141 & 195 & 210 \\
$2^{-7}$ &  
7 & 10 & 14 & 18 &   
4.0E5 & 5.0E5 & 7.1E5 & 9.8E5 &
%402,955 & 497,672 & 714,910 & 982,492 &   
68 & 102 & 162 & 224 \\
$2^{-8}$ & 
8 & 14 & 19 & 21 &   
1.6E6 & 2.5E6 & 3.5E6 & 4.0E6 &
%1,588,004 & 2,525,060 & 3,476,568 & 4,018,237 &  
85 & 155 & 207 & 234 \\
$2^{-9}$ & 
7 & 11 & 15 & 18 &   
4.5E6 & 6.4E6 & 9.6E6 & 1.3E7 &
%4502169 & 6431884 & 9564825 & 12666277 &  
77 & 124 & 186 & 245 \\
\cline{1-13}
& \multicolumn{4}{c|}{global reductions} & \multicolumn{4}{c|}{global energy evaluations} & \multicolumn{4}{c}{global gradient evaluations} \\
\cline{2-13}
\diagbox{$h$}{$\epsilon$} & 
$10^0$ & $10^{-1}$ & $10^{-2}$ & $10^{-3}$ &
$10^0$ & $10^{-1}$ & $10^{-2}$ & $10^{-3}$ & 
$10^0$ & $10^{-1}$ & $10^{-2}$ & $10^{-3}$ \\
\cline{1-13}
$2^{-6}$ & 
7 & 14 & 19 & 19 &  
15 & 30 & 38 & 38 &  
7 & 14 & 19 & 19 \\
$2^{-7}$ & 
7 & 10 & 14 & 18 &    
14 & 20 & 29 & 36 &  
7 & 10 & 14 & 18 \\
$2^{-8}$ & 
8 & 14 & 19 & 21 &    
17 & 28 & 38 & 42 &  
8 & 14 & 19 & 21 \\
$2^{-9}$ & 
7 & 11 & 15 & 18 &   
14 & 23 & 30 & 38 &  
7 & 11 & 15 & 18 \\
\cline{1-13}
\multicolumn{13}{c}{} \\
\multicolumn{13}{c}{$\beta = 30$} \\
\cline{1-13}
 & \multicolumn{4}{c|}{outer iterations} &
 \multicolumn{4}{c|}{local Hessian assemblies/solves} & \multicolumn{4}{c}{parallel patch inner solves} \\
\cline{2-13}
\diagbox{$h$}{$\epsilon$} & 
$10^0$ & $10^{-1}$ & $10^{-2}$ & $10^{-3}$ &
$10^0$ & $10^{-1}$ & $10^{-2}$ & $10^{-3}$ & 
$10^0$ & $10^{-1}$ & $10^{-2}$ & $10^{-3}$ \\
\cline{1-13}
$2^{-6}$ & 
6 & 13 & 17 & 20 &   
1.0E5 & 1.9E5 & 2.4E5 & 2.8E5 &
%101,675 & 186,691 & 238,287 & 283,262 &  
58 & 126 & 179 & 214 \\
$2^{-7}$ & 
6 & 10 & 13 & 17 &    
3.7E5 & 5.0E5 & 6.6E5 & 9.3E5 &
%368,357 & 503,518 & 662,851 & 928,303 &  
61 & 106 & 149 & 211 \\
$2^{-8}$ & 
7 & 14 & 18 & 21 &    
1.4E6 & 2.5E6 & 3.3E6 & 4.0E6 &
%1,393,159 & 2,492,089 & 3,348,832 & 3,998,582 &  
76 & 153 & 200 & 237 \\
$2^{-9}$ & 
6 & 10 & 14 & 17 &   
4.1E6 & 5.9E6 & 9.0E6 & 1.2E7 &
%4,075,327 & 5,868,055 & 8,982,146 & 11,848,276 &  
67 & 112 & 176 & 232 \\
\cline{1-13}
& \multicolumn{4}{c|}{global reductions} & \multicolumn{4}{c|}{global energy evaluations} & \multicolumn{4}{c}{global gradient evaluations} \\
\cline{2-13}
\diagbox{$h$}{$\epsilon$} & 
$10^0$ & $10^{-1}$ & $10^{-2}$ & $10^{-3}$ &
$10^0$ & $10^{-1}$ & $10^{-2}$ & $10^{-3}$ & 
$10^0$ & $10^{-1}$ & $10^{-2}$ & $10^{-3}$ \\
\cline{1-13}
$2^{-6}$ & 
6 & 13 & 17 & 20 &    
12 & 26 & 34 & 41 & 
6 & 13 & 17 & 20 \\
$2^{-7}$ & 
6 & 10 & 13 & 17 &   
13 & 21 & 27 & 35 &  
6 & 10 & 13 & 17 \\
$2^{-8}$ & 
7 & 14 & 18 & 21 &     
15 & 28 & 36 & 42 &   
7 & 14 & 18 & 21 \\
$2^{-9}$ & 
6 & 10 & 14  & 17 &   
12 & 22 & 29 & 34 &  
6 & 10 & 14  & 17 \\
\cline{1-13}
\end{tabular}
}
\end{table}

To assess the robustness and computational efficiency of \cref{Alg:multilevel_FAS} with respect to the augmented Lagrangian parameter $\epsilon$ and the mesh size $h$, we report in \cref{Table:dependence_Ex1,Table:dependence_Ex2} numerical results, including idealized parallel cost proxies, for various values of $\epsilon$ and $h$ for \cref{Ex:Ex1,Ex:Ex2}, respectively.
The following stopping criterion is used:
\begin{equation}
\label{stop_multilevel}
\left| \frac{F^{\epsilon} (\bu^{(n)}) - \min F^{\epsilon} }{  \min F^{\epsilon} } \right| < 10^{-3}.
\end{equation}

More precisely, we report:
\begin{itemize}
    \item Outer iterations: number of outer iterations to satisfy~\eqref{stop_multilevel}.
    \item Local Hessian assemblies/solves: number of assemblies and solves of local patch-based Hessian systems (equal for \cref{Alg:multilevel_FAS}).
    \item Parallel patch inner solves: ideal parallel cost for solving all local Hessian systems across levels and patches.
    \item Global reductions: evaluation of $\sum_{j=1}^J \sum_{k=1}^{n_j} \bw_{j,k}$ for $\bw_{j,k} \in V_{j,k}$.
    \item Global energy evaluations: evaluation of $F^{\epsilon}(\bv)$ for $\bv \in V$ (for backtracking).
    \item Global gradient evaluations: evaluation of $(F^{\epsilon})'(\bv)$ for $\bv \in V$.
\end{itemize}
For the proposed multilevel methods, the numbers of outer iterations, global reductions, and gradient evaluations coincide, as each is performed once per outer iteration, highlighting communication efficiency.
Since the number of Newton iterations varies across local problems, the parallel patch inner solves are defined as the sum of the maximum numbers of inner Newton iterations over all parallel phases, reflecting an idealized per-process cost under full parallelization, while the total number of local Hessian assemblies/solves measures the overall computational cost.

We observe that the number of outer iterations does not increase significantly as $\epsilon$ decreases, which is consistent with the $\epsilon$-robust behavior predicted by \cref{Thm:convergence} for the exact local-solver variant.
The observed weak dependence on $h$ provides numerical evidence of mesh robustness.

In terms of computational cost, we observe that the number of local Hessian assemblies/solves increases by approximately a factor of four when the mesh size $h$ is halved.
This indicates that the total computational cost of \cref{Alg:multilevel_FAS} scales nearly linearly with the number of degrees of freedom.
In contrast, the number of parallel patch inner solves increases only very slowly as the mesh is refined.
Thus, under the idealized parallel cost model used here, the parallel patch inner-solve count remains nearly constant even for large-scale problems, highlighting the potential of the proposed multilevel method for parallel implementation.

In terms of communication-related counts, we observe that the numbers of global reductions and global gradient evaluations coincide with the number of outer iterations, indicating a low communication-related count per iteration.
Moreover, the number of global energy evaluations is only about two to three times the number of outer iterations.
This indicates that the backtracking line search requires only a small number of additional energy evaluations per iteration, incurring modest communication-related overhead in the idealized count model while significantly improving the convergence rate, as observed in \cref{Fig:backt_Ex1,Fig:backt_Ex2}.

Finally, to examine the effect of the coarsening factor in the multilevel structure, we present in \cref{Table:dependence_Ex1_coarsening,Table:dependence_Ex2_coarsening} numerical results for \cref{Alg:multilevel_FAS} with coarsening factor $\gamma = 2^{-2}$ in the multilevel space decomposition~\eqref{space_decomposition}.
We observe that the number of outer iterations changes only mildly as the mesh is refined, while the robustness with respect to the augmented Lagrangian parameter $\epsilon$ deteriorates compared to the case $\gamma = 2^{-1}$, especially for \cref{Ex:Ex2}; the number of outer iterations increases slightly as $\epsilon$ decreases.

Compared with the case $\gamma = 2^{-1}$ in \cref{Table:dependence_Ex1,Table:dependence_Ex2}, the number of outer iterations increases.
Thus, although fewer parallel subspaces are used, the number of local Hessian assemblies/solves, which measures total computational cost, does not improve and may even worsen in some cases.
Accordingly, the number of parallel patch inner solves also increases.
This indicates that using a smaller coarsening factor does not improve either the total-cost metric or the idealized parallel-cost proxy.

% Subsection: Comparison with multilevel Newton--Krylov--Schwarz
\subsection{Comparison with multilevel Newton--Krylov--Schwarz}

% Table: Proposed vs NKS - Example 1
\begin{table}
\label{Table:comparison_Ex1}
\centering
\caption{Numerical results of the proposed multilevel method (\cref{Alg:multilevel_FAS}, Proposed) and the parallel Newton--Krylov--Schwarz method (NKS) satisfying the stopping criterion~\eqref{stop_multilevel}, for various mesh sizes $h$ (\cref{Ex:Ex1}, $\epsilon = 10^{-2}$).}
\resizebox{0.99\textwidth}{!}{
\begin{tabular}{c|cc|cc|cc|cc}
\multicolumn{9}{c}{$\beta = 10$} \\
\cline{1-9}
 & \multicolumn{2}{c|}{outer iterations} &
 \multicolumn{2}{c|}{global Hessian assemblies} &
 \multicolumn{2}{c|}{local Hessian assemblies} & \multicolumn{2}{c}{local Hessian solves} \\
\cline{2-9}
$h$ & 
Proposed & NKS & Proposed & NKS &
Proposed & NKS & Proposed & NKS \\
\cline{1-9}
$2^{-6}$ & 9 & 7 & 0 & 7 & 1.5E5 & 4.0E4 & 1.5E5 & 1.3E6 \\
$2^{-7}$ & 9 & 7 & 0 & 7 & 5.7E5 & 1.6E5 & 5.7E5 & 5.5E6 \\
$2^{-8}$ & 10 & 7 & 0 & 7 & 2.2E6 & 6.2E5 & 2.2E6 & 2.3E7 \\
$2^{-9}$ & 10 & 7 & 0 & 7 & 8.0E6 & 2.5E6 & 8.0E6 & 9.7E7 \\
\cline{1-9}
& \multicolumn{2}{c|}{parallel patch inner solves} & \multicolumn{2}{c|}{global reductions} &
\multicolumn{2}{c|}{global energy evaluations} & \multicolumn{2}{c}{global gradient evaluations} \\
\cline{2-9}
$h$ & 
Proposed & NKS & Proposed & NKS &
Proposed & NKS & Proposed & NKS \\
\cline{1-9}
$2^{-6}$ & 69 & 229 & 9 & 229 & 20 & 15 & 9 & 7 \\
$2^{-7}$ & 72 & 244 & 9 & 244 & 19 & 15 & 9 & 7 \\
$2^{-8}$ & 83 & 261 & 10 & 261 & 21 & 15 & 10 & 7 \\
$2^{-9}$ & 93 & 276 & 10 & 276 & 22 & 15 & 10 & 7 \\
\cline{1-9}
\multicolumn{9}{c}{} \\
\multicolumn{9}{c}{$\beta = 20$} \\
\cline{1-9}
 & \multicolumn{2}{c|}{outer iterations} &
 \multicolumn{2}{c|}{global Hessian assemblies} &
 \multicolumn{2}{c|}{local Hessian assemblies} & \multicolumn{2}{c}{local Hessian solves} \\
\cline{2-9}
$h$ & 
Proposed & NKS & Proposed & NKS &
Proposed & NKS & Proposed & NKS \\
\cline{1-9}
$2^{-6}$ & 8 & 8 & 0 & 8 & 1.4E5 & 4.5E4 & 1.4E5 & 1.5E6 \\
$2^{-7}$ & 9 & 8 & 0 & 8 & 5.7E5 & 1.8E5 & 5.7E5 & 6.2E6 \\
$2^{-8}$ & 10 & 8 & 0 & 8 & 2.1E6 & 7.1E5 & 2.1E6 & 2.6E7 \\
$2^{-9}$ & 10 & 8 & 0 & 8 & 7.9E6 & 2.8E6 & 7.9E6 & 1.1E8 \\
\cline{1-9}
& \multicolumn{2}{c|}{parallel patch inner solves} & \multicolumn{2}{c|}{global reductions} &
\multicolumn{2}{c|}{global energy evaluations} & \multicolumn{2}{c}{global gradient evaluations} \\
\cline{2-9}
$h$ & 
Proposed & NKS & Proposed & NKS &
Proposed & NKS & Proposed & NKS \\
\cline{1-9}
$2^{-6}$ & 65 & 257 & 8 & 257 & 17 & 17 & 8 & 8 \\
$2^{-7}$ & 76 & 279 & 9 & 279 & 19 & 17 & 9 & 8 \\
$2^{-8}$ & 87 & 295 & 10 & 295 & 22 & 17 & 10 & 8 \\
$2^{-9}$ & 92 & 310 & 10 & 310 & 21 & 17 & 10 & 8 \\
\cline{1-9}
\multicolumn{9}{c}{} \\
\multicolumn{9}{c}{$\beta = 30$} \\
\cline{1-9}
 & \multicolumn{2}{c|}{outer iterations} &
 \multicolumn{2}{c|}{global Hessian assemblies} &
 \multicolumn{2}{c|}{local Hessian assemblies} & \multicolumn{2}{c}{local Hessian solves} \\
\cline{2-9}
$h$ & 
Proposed & NKS & Proposed & NKS &
Proposed & NKS & Proposed & NKS \\
\cline{1-9}
$2^{-6}$ & 8 & 8 & 0 & 8 & 1.5E5 & 4.5E4 & 1.5E5 & 1.4E6 \\
$2^{-7}$ & 9 & 8 & 0 & 8 & 5.6E5 & 1.8E5 & 5.6E5 & 5.9E6 \\
$2^{-8}$ & 9 & 8 & 0 & 8 & 2.0E6 & 7.1E5 & 2.0E6 & 2.5E7 \\
$2^{-9}$ & 10 & 8 & 0 & 8 & 7.7E6 & 2.8E6 & 7.7E6 & 1.0E8 \\
\cline{1-9}
& \multicolumn{2}{c|}{parallel patch inner solves} & \multicolumn{2}{c|}{global reductions} &
\multicolumn{2}{c|}{global energy evaluations} & \multicolumn{2}{c}{global gradient evaluations} \\
\cline{2-9}
$h$ & 
Proposed & NKS & Proposed & NKS &
Proposed & NKS & Proposed & NKS \\
\cline{1-9}
$2^{-6}$ & 66 & 248 & 8 & 248 & 17 & 16 & 8 & 8 \\
$2^{-7}$ & 75 & 265 & 9 & 265 & 20 & 16 & 9 & 8 \\
$2^{-8}$ & 81 & 283 & 9 & 283 & 19 & 16 & 9 & 8 \\
$2^{-9}$ & 94 & 296 & 10 & 296 & 22 & 16 & 10 & 8 \\
\cline{1-9}
\end{tabular}
}
\end{table}

% Table: Proposed vs NKS - Example 2
\begin{table}
\label{Table:comparison_Ex2}
\centering
\caption{Numerical results of the proposed multilevel method (\cref{Alg:multilevel_FAS}, Proposed) and the parallel Newton--Krylov--Schwarz method (NKS) satisfying the stopping criterion~\eqref{stop_multilevel}, for various mesh sizes $h$ (\cref{Ex:Ex2}, $\epsilon = 10^{-2}$).}
\resizebox{0.99\textwidth}{!}{
\begin{tabular}{c|cc|cc|cc|cc}
\multicolumn{9}{c}{$\beta = 10$} \\
\cline{1-9}
 & \multicolumn{2}{c|}{outer iterations} &
 \multicolumn{2}{c|}{global Hessian assemblies} &
 \multicolumn{2}{c|}{local Hessian assemblies} & \multicolumn{2}{c}{local Hessian solves} \\
\cline{2-9}
$h$ & 
Proposed & NKS & Proposed & NKS &
Proposed & NKS & Proposed & NKS \\
\cline{1-9}
$2^{-6}$ & 11 & 6 & 0 & 6 & 1.8E5 & 3.4E4 & 1.8E5 & 8.6E5 \\
$2^{-7}$ & 12 & 6 & 0 & 6 & 7.0E5 & 1.3E5 & 7.0E5 & 3.6E6 \\
$2^{-8}$ & 13 & 6 & 0 & 6 & 2.7E6 & 5.3E5 & 2.7E6 & 1.5E7 \\
$2^{-9}$ & 13 & 6 & 0 & 6 & 9.5E6 & 2.1E6 & 9.5E6 & 6.2E7 \\
\cline{1-9}
& \multicolumn{2}{c|}{parallel patch inner solves} & \multicolumn{2}{c|}{global reductions} &
\multicolumn{2}{c|}{global energy evaluations} & \multicolumn{2}{c}{global gradient evaluations} \\
\cline{2-9}
$h$ & 
Proposed & NKS & Proposed & NKS &
Proposed & NKS & Proposed & NKS \\
\cline{1-9}
$2^{-6}$ & 117 & 150 & 11 & 150 & 24 & 12 & 11 & 6 \\
$2^{-7}$ & 139 & 159 & 12 & 159 & 25 & 12 & 12 & 6 \\
$2^{-8}$ & 154 & 168 & 13 & 168 & 27 & 12 & 12 & 6 \\
$2^{-9}$ & 153 & 176 & 13 & 176 & 27 & 12 & 13 & 6 \\
\cline{1-9}
\multicolumn{9}{c}{} \\
\multicolumn{9}{c}{$\beta = 20$} \\
\cline{1-9}
 & \multicolumn{2}{c|}{outer iterations} &
 \multicolumn{2}{c|}{global Hessian assemblies} &
 \multicolumn{2}{c|}{local Hessian assemblies} & \multicolumn{2}{c}{local Hessian solves} \\
\cline{2-9}
$h$ & 
Proposed & NKS & Proposed & NKS &
Proposed & NKS & Proposed & NKS \\
\cline{1-9}
$2^{-6}$ & 10 & 6 & 0 & 6 & 1.7E5 & 3.4E4 & 1.7E5 & 8.2E5 \\
$2^{-7}$ & 11 & 6 & 0 & 6 & 6.7E5 & 1.3E5 & 6.7E5 & 3.4E6 \\
$2^{-8}$ & 12 & 6 & 0 & 6 & 2.5E6 & 5.3E5 & 2.5E6 & 1.4E7 \\
$2^{-9}$ & 13 & 6 & 0 & 6 & 9.2E6 & 2.1E6 & 9.2E6 & 5.9E7 \\
\cline{1-9}
& \multicolumn{2}{c|}{parallel patch inner solves} & \multicolumn{2}{c|}{global reductions} &
\multicolumn{2}{c|}{global energy evaluations} & \multicolumn{2}{c}{global gradient evaluations} \\
\cline{2-9}
$h$ & 
Proposed & NKS & Proposed & NKS &
Proposed & NKS & Proposed & NKS \\
\cline{1-9}
$2^{-6}$ & 112 & 144 & 10 & 144 & 21 & 12 & 10 & 6 \\
$2^{-7}$ & 129 & 153 & 11 & 153 & 24 & 12 & 11 & 6 \\
$2^{-8}$ & 158 & 160 & 12 & 160 & 25 & 12 & 12 & 6 \\
$2^{-9}$ & 158 & 168 & 13 & 168 & 27 & 12 & 13 & 6 \\
\cline{1-9}
\multicolumn{9}{c}{} \\
\multicolumn{9}{c}{$\beta = 30$} \\
\cline{1-9}
 & \multicolumn{2}{c|}{outer iterations} &
 \multicolumn{2}{c|}{global Hessian assemblies} &
 \multicolumn{2}{c|}{local Hessian assemblies} & \multicolumn{2}{c}{local Hessian solves} \\
\cline{2-9}
$h$ & 
Proposed & NKS & Proposed & NKS &
Proposed & NKS & Proposed & NKS \\
\cline{1-9}
$2^{-6}$ & 10 & 11 & 0 & 11 & 1.7E5 & 6.3E4 & 1.7E5 & 1.5E6 \\
$2^{-7}$ & 11 & 11 & 0 & 11 & 6.6E5 & 2.5E5 & 6.6E5 & 6.2E6 \\
$2^{-8}$ & 12 & 11 & 0 & 11 & 2.5E6 & 9.7E5 & 2.5E6 & 2.6E7 \\
$2^{-9}$ & 12 & 11 & 0 & 11 & 8.6E6 & 3.8E6 & 8.6E6 & 1.1E8 \\
\cline{1-9}
& \multicolumn{2}{c|}{parallel patch inner solves} & \multicolumn{2}{c|}{global reductions} &
\multicolumn{2}{c|}{global energy evaluations} & \multicolumn{2}{c}{global gradient evaluations} \\
\cline{2-9}
$h$ & 
Proposed & NKS & Proposed & NKS &
Proposed & NKS & Proposed & NKS \\
\cline{1-9}
$2^{-6}$ & 110 & 259 & 10 & 259 & 21 & 23 & 10 & 11 \\
$2^{-7}$ & 130 & 278 & 11 & 278 & 24 & 23 & 11 & 11 \\
$2^{-8}$ & 144 & 292 & 12 & 292 & 25 & 23 & 12 & 11 \\
$2^{-9}$ & 145 & 305 & 12 & 305 & 26 & 23 & 12 & 11 \\
\cline{1-9}
\end{tabular}
}
\end{table}

To assess the nonlinear multilevel decomposition used in the proposed methods, we compare \cref{Alg:multilevel_FAS} with the multilevel Newton--Krylov--Schwarz~\cite{CGKMY:1998,CGKT:1994} method for solving the nearly semicoercive formulation~\eqref{mixed_FEM_nearly}, using the same multilevel space decomposition.
In the multilevel Newton--Krylov--Schwarz method, the nonlinearity is handled by Newton iterations, and each linearized subproblem involving the global Hessian is solved by a parallel multilevel method.
Here, we use the damped Newton method~\cite{BV:2004} to ensure global convergence, and each linear subproblem is solved by the preconditioned conjugate gradient method with a standard multilevel additive Schwarz preconditioner~\cite{BPX:1990,DSW:1996,Zhang:1992,Zhang:1994}.
For the outer Newton iteration, we use the stopping criterion~\eqref{stop_multilevel}, as in \cref{Alg:multilevel_FAS}, while for the inner preconditioned conjugate gradient iterations we require the relative residual to be less than $10^{-5}$.

In \cref{Table:comparison_Ex1,Table:comparison_Ex2}, we report numerical results for \cref{Alg:multilevel_FAS} and the multilevel Newton--Krylov--Schwarz method applied to \cref{Ex:Ex1,Ex:Ex2}, respectively.
For both algorithms, the numbers of outer iterations, as well as the numbers of global energy and gradient evaluations, are comparable.
While \cref{Alg:multilevel_FAS} does not require assembly of global Hessians, the multilevel Newton--Krylov--Schwarz method requires one global Hessian assembly per outer iteration.
On the other hand, \cref{Alg:multilevel_FAS} requires about four times more local Hessian assemblies, but each local problem is solved only once, whereas in Newton--Krylov--Schwarz it must be solved multiple times due to the preconditioned conjugate gradient iterations.
As a result, the number of local Hessian solves in Newton--Krylov--Schwarz is about ten times larger than in \cref{Alg:multilevel_FAS}, indicating a significantly higher total computational cost.

In terms of the number of parallel patch inner solves, which serves as an implementation-independent proxy for ideal parallel cost, Newton--Krylov--Schwarz requires three to four times more than \cref{Alg:multilevel_FAS}.
Thus, under the idealized cost model used here, it incurs a proportionally larger predicted parallel cost.
Finally, \cref{Alg:multilevel_FAS} requires only one global reduction per outer iteration, whereas Newton--Krylov--Schwarz requires two reductions per inner preconditioned conjugate gradient iteration, leading to substantially higher communication-related counts.
Overall, \cref{Alg:multilevel_FAS} shows lower total cost proxies and lower communication-related counts in these experiments.

% Remark: Mesh-independence principle
\begin{remark}
\label{Rem:Newton}
In \cref{Table:comparison_Ex1,Table:comparison_Ex2}, we observe that the number of outer iterations of the Newton--Krylov--Schwarz method remains unchanged as the mesh size $h$ is refined in all cases.
This phenomenon is consistent with the well-known mesh-independence principle of the Newton method; see, e.g.,~\cite{ABPR:1986}.
\end{remark}

% Section: Conclusion
\section{Conclusion}
\label{Sec:Conclusion}
In this paper, we proposed multilevel methods for solving the Darcy--Forchheimer model based on a nearly semicoercive formulation.
We first proved that the Darcy--Forchheimer model, formulated as a mixed problem, can be reduced to a nearly semicoercive convex optimization problem via the augmented Lagrangian method.
Then, based on this nearly semicoercive formulation, we developed a multilevel method utilizing multilevel patch-based space decomposition, with convergence analysis demonstrating robustness with respect to the augmented Lagrangian parameter $\epsilon$.
Additionally, we introduced a backtracking line search and an FAS approach for reducing the cost of the local subproblems within the proposed multilevel method.

One interesting direction for future research is the rigorous mathematical analysis of the convergence behavior of the proposed multilevel methods with respect to the mesh size $h$.
While $h$-independent convergence rates of parallel multilevel methods for linear problems have been extensively studied in the literature~(e.g.,~\cite{Zhang:1992,Zhang:1994}), extending these results to our nonlinear setting poses significant challenges.
One major difficulty lies in controlling the $L^3(\Omega)^d$-norm in addition to the $H(\div; \Omega)$-norm, as discussed in \cref{Sec:Multilevel}.

\appendix
% Appendix: Augmented Lagrangian method
\section{Convergence analysis of the augmented Lagrangian method}
\label{App:Aug}
In this appendix, we present a proof of \cref{Thm:aug} in an abstract setting.
That is, we provide a convergence rate analysis of the augmented Lagrangian method~\cite{Hestenes:1969,Powell:1969} for solving convex optimization problems with linear constraints.

Let $V$ and $W$ be finite-dimensional real vector spaces equipped with inner products $(\cdot, \cdot)$ and associated norms $\| \cdot \|$.
We consider the following general convex optimization problem with a linear constraint:
\begin{equation}
\label{general}
\min_{v \in V} F(v) 
\quad \text{subject to} \quad Bv = g,
\end{equation}
where $B \colon V \rightarrow W$ is a linear operator, $F \colon V \rightarrow \overline{\mathbb{R}}$ is a convex functional, and $g \in W$.
Let $u \in V$ denote a solution to~\eqref{general}. The augmented Lagrangian method for solving~\eqref{general} is summarized in \cref{Alg:aug_general}.

% Algorithm: Augmented Lagrangian method
\begin{algorithm}
\caption{Augmented Lagrangian method for solving~\eqref{general}}
\begin{algorithmic}[]
\label{Alg:aug_general}
\STATE Given $\epsilon > 0$:
\STATE Choose $p^{(0)} \in W$.
\FOR{$n=0,1,2,\dots$}
    \STATE $\displaystyle
    u^{(n+1)} = \operatornamewithlimits{\arg\min}_{v \in V} \left\{ F (v) + ( p^{(n)}, Bv - g ) + \frac{1}{2\epsilon} \| Bv - g \|^2 \right\}
    $
    \STATE $\displaystyle
    p^{(n+1)} = p^{(n)} + \epsilon^{-1} ( B u^{(n+1)} - g)
    $
\ENDFOR
\end{algorithmic}
\end{algorithm}

We observe that \cref{Alg:aug_general} reduces to \cref{Alg:aug} if we set
\begin{equation}
\label{aug_general_reduction}
V \leftarrow X_h, \quad
W \leftarrow M_h, \quad
\| \cdot \| \leftarrow \| \cdot \|_{L^2 (\Omega)}, \quad
F(v) \leftarrow F (\bv), \quad
B \leftarrow - \div, \quad
g \leftarrow - g_h.
\end{equation}

To carry out the convergence analysis, we leverage the equivalence between the augmented Lagrangian method and the proximal point algorithm~\cite{Martinet:1970,Rockafellar:1976} for solving a dual problem, as established in~\cite{JPX:2025,Rockafellar:1976b,Setzer:2011}.
By invoking Fenchel--Rockafellar duality~\cite{Rockafellar:1970}, the dual problem associated with~\eqref{general} is given by
\begin{equation}
\label{general_dual}
\min_{q \in W} \left\{ F^*(-B^* q) + (g, q) \right\},
\end{equation}
where $B^* \colon W \rightarrow V$ denotes the adjoint of $B$, and $F^* \colon V \rightarrow \overline{\mathbb{R}}$ is the Legendre--Fenchel conjugate of $F$, defined by
\begin{equation}
\label{Legendre_Fenchel}
F^*(w) = \sup_{v \in V} \left\{ (w, v) - F(v) \right\}, \quad w \in V.
\end{equation}

The proximal point algorithm for solving~\eqref{general_dual} is summarized in \cref{Alg:proximal_point}.

% Algorithm: Proximal point algorithm
\begin{algorithm}
\caption{Proximal point algorithm for solving~\eqref{general_dual}}
\begin{algorithmic}[]
\label{Alg:proximal_point}
\STATE Given $\epsilon > 0$:
\STATE Choose $q^{(0)} \in W$.
\FOR{$n=0,1,2,\dots$}
    \STATE $\displaystyle
    q^{(n+1)} = \operatornamewithlimits{\arg\min}_{q \in W} \left\{ F^* (- B^* q) + (g, q) + \frac{\epsilon}{2} \| q - q^{(n)} \|^2 \right\}
    $
\ENDFOR
\end{algorithmic}
\end{algorithm}

In \cref{Lem:proximal_point_equiv}, we summarize the equivalence between \cref{Alg:aug_general,Alg:proximal_point}; see~\cite[Remark~6.2]{JPX:2025} for details. 

% Lemma: Proximal point algorithm
\begin{lemma}
\label{Lem:proximal_point_equiv}
The augmented Lagrangian method presented in \cref{Alg:aug_general} is equivalent to the proximal point algorithm presented in \cref{Alg:proximal_point} in the following sense: if $p^{(0)} = q^{(0)}$, then we have $p^{(n)} = q^{(n)}$ for all $n \geq 0$.
\end{lemma}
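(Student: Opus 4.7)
The plan is a straightforward induction on $n$, in which one step of the augmented Lagrangian method is matched to one step of the dual proximal point algorithm by comparing first-order optimality conditions through Fenchel--Young duality. The base case $n = 0$ is precisely the hypothesis $p^{(0)} = q^{(0)}$, so the real work lies in the inductive step: assuming $p^{(n)} = q^{(n)}$, I would derive $p^{(n+1)} = q^{(n+1)}$.

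First, I would write the optimality condition of the $u^{(n+1)}$-subproblem in \cref{Alg:aug_general},
\begin{equation*}
0 \in \partial F(u^{(n+1)}) + B^* p^{(n)} + \epsilon^{-1} B^* (B u^{(n+1)} - g),
\end{equation*}
and substitute the dual update $p^{(n+1)} = p^{(n)} + \epsilon^{-1}(Bu^{(n+1)} - g)$. This collapses to $-B^* p^{(n+1)} \in \partial F(u^{(n+1)})$, which by the conjugate subgradient inversion $w \in \partial F(v) \iff v \in \partial F^*(w)$ is equivalent to $u^{(n+1)} \in \partial F^*(-B^* p^{(n+1)})$.

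Next, I would expand the optimality condition of the $q^{(n+1)}$-subproblem in \cref{Alg:proximal_point} via the chain rule through $-B^*$: there must exist $\tilde{u} \in \partial F^*(-B^* q^{(n+1)})$ such that $-B\tilde{u} + g + \epsilon(q^{(n+1)} - q^{(n)}) = 0$, i.e., $q^{(n+1)} = q^{(n)} + \epsilon^{-1}(B\tilde{u} - g)$. Taking the candidate $(q^{(n+1)}, \tilde{u}) = (p^{(n+1)}, u^{(n+1)})$ and invoking the inductive hypothesis $q^{(n)} = p^{(n)}$, the previous paragraph supplies exactly the required subgradient inclusion, while the increment equation reduces to the augmented Lagrangian dual update. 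Since the $q^{(n+1)}$-subproblem is strictly convex in $q$ (thanks to the quadratic proximal term), its minimizer is unique, and therefore $q^{(n+1)} = p^{(n+1)}$, closing the induction.

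The only technical point demanding care is the use of the sum rule and chain rule for subdifferentials when extracting these two optimality conditions, but since $V$ and $W$ are finite-dimensional, $F$ is proper, lower semicontinuous, and convex, and $B$ is a bounded linear map, the usual constraint qualifications hold automatically and the calculus applies without fuss. I expect no genuine obstacle: the equivalence is classical and already recorded in the essentially identical setting of~\cite{JPX:2025,Setzer:2011}, so I would present the argument as above and cite those references for any remaining routine verifications.
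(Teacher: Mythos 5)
Your proof is correct. The paper itself does not prove this lemma---it simply defers to \cite[Remark~6.2]{JPX:2025}---and your induction, matching the first-order optimality conditions of the two subproblems through the conjugate subgradient inversion and concluding by uniqueness of the strongly convex proximal subproblem's minimizer, is exactly the standard argument that reference records. One small simplification worth noting: in the direction you actually use, you only need the elementary inclusion $-B\,\partial F^*(-B^*q) \subseteq \partial\bigl(F^*\circ(-B^*)\bigr)(q)$, which holds with no constraint qualification at all, so the technical caveat in your last paragraph can be dispensed with entirely.
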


Thanks to \cref{Lem:proximal_point_equiv}, it suffices to analyze \cref{Alg:proximal_point} in order to establish the convergence rate of \cref{Alg:aug_general}.
In \cref{Lem:proximal_point_conv}, we present the convergence result for the proximal point algorithm, as established in~\cite[Example~23.40]{BC:2011}; see also~\cite{Kim:2021,Rockafellar:1976}.

% Lemma: Convergence of the proximal point algorithm
\begin{lemma}
\label{Lem:proximal_point_conv}
In~\eqref{general_dual}, suppose that $F^* \circ (- B^*)$ is $\mu$-strongly convex for some $\mu > 0$, ensuring that~\eqref{general_dual} admits a unique solution $p \in W$.
Then, in the proximal point algorithm presented in \cref{Alg:proximal_point}, we have
\begin{equation*}
\| q^{(n+1)} - p \| \leq \frac{\epsilon}{\mu + \epsilon} \| q^{(n)} - p \|,
\quad n \geq 0.
\end{equation*}
\end{lemma}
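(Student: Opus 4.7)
The plan is to prove the claimed linear contraction via the well-known firm nonexpansiveness of the resolvent of a strongly monotone operator, but formulated directly in terms of the subdifferential inequality so that no abstract machinery is invoked beyond convex subdifferentials and Cauchy--Schwarz.

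First, I would set $\Phi(q) := F^*(-B^*q) + (g, q)$ and observe that the linear term $(g, q)$ does not affect the modulus of strong convexity, so $\Phi$ is $\mu$-strongly convex on $W$. This already yields the uniqueness of $p \in W$ stated in the hypothesis, and, equivalently, that $\partial \Phi$ is a $\mu$-strongly monotone operator, i.e.
\begin{equation*}
(s_1 - s_2,\ q_1 - q_2) \ \geq\ \mu\,\|q_1 - q_2\|^2 \qquad \forall\, s_i \in \partial \Phi(q_i),\ i=1,2.
\end{equation*}

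Next, I would write the first-order optimality conditions for the proximal step in \cref{Alg:proximal_point}. Differentiating the strongly convex objective $\Phi(q) + \frac{\epsilon}{2}\|q - q^{(n)}\|^2$ gives
\begin{equation*}
\epsilon\bigl(q^{(n)} - q^{(n+1)}\bigr) \ \in\ \partial \Phi(q^{(n+1)}),
\end{equation*}
while the optimality of the limit $p$ for~\eqref{general_dual} gives $0 \in \partial \Phi(p)$.

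With these two inclusions in hand, I would apply the strong monotonicity inequality to the pair $(q^{(n+1)}, p)$ with the corresponding subgradients $\epsilon(q^{(n)} - q^{(n+1)})$ and $0$, obtaining
\begin{equation*}
\epsilon\bigl(q^{(n)} - q^{(n+1)},\ q^{(n+1)} - p\bigr) \ \geq\ \mu\,\|q^{(n+1)} - p\|^2.
\end{equation*}
Rewriting $q^{(n)} - q^{(n+1)} = (q^{(n)} - p) - (q^{(n+1)} - p)$ and moving the resulting $-\epsilon\|q^{(n+1)} - p\|^2$ to the right-hand side yields
\begin{equation*}
\epsilon\bigl(q^{(n)} - p,\ q^{(n+1)} - p\bigr) \ \geq\ (\mu + \epsilon)\,\|q^{(n+1)} - p\|^2.
\end{equation*}
Finally, Cauchy--Schwarz on the left-hand side and division by $\|q^{(n+1)} - p\|$ (the case $q^{(n+1)} = p$ being trivial) give the desired contraction factor $\epsilon/(\mu + \epsilon)$.

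This argument is essentially bookkeeping once the strong convexity of $\Phi$ is in place; the only mild subtlety I anticipate is that $F^*$ may be extended-real-valued and not everywhere differentiable, which is why I prefer the subdifferential formulation and the monotonicity inequality to a gradient-based computation. No part of the proof requires finite-dimensionality or any structure of $B$ beyond what is already encoded in the assumed strong convexity of $F^* \circ (-B^*)$.
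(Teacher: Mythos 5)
Your argument is correct: it is the standard resolvent-contraction proof (optimality inclusions for the proximal step and for $p$, $\mu$-strong monotonicity of $\partial\Phi$, then Cauchy--Schwarz), and the algebra checks out, including the factor $\epsilon/(\mu+\epsilon)$ arising from the proximal parameter $\epsilon$. The paper does not prove this lemma itself but cites it (e.g.\ \cite[Example~23.40]{BC:2011}), and your derivation is precisely the canonical argument behind that citation, so there is nothing further to compare.
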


% Remark: Local strong convexity
\begin{remark}
\label{Rem:local_strong_convexity}
For certain convex optimization problems of the form~\eqref{general_dual} arising in partial differential equations~(see, e.g.,~\cite{LP:2025a}), particularly the dual problem associated with~\eqref{mixed_FEM_opt}, the composition $F^* \circ (- B^*)$ is only \emph{locally} strongly convex.
Nevertheless, since \cref{Alg:proximal_point} is a contraction, the sequence $\{ q^{(n)} \}$ remains bounded.
This boundedness allows the assumption of global strong convexity in \cref{Lem:proximal_point_conv} to be relaxed to a local one.
For simplicity and clarity of the analysis, however, we continue to assume global strong convexity throughout.
A related discussion appears in~\cite[Remark~2.1]{TX:2002}.
\end{remark}

Using \cref{Lem:proximal_point_equiv,Lem:proximal_point_conv}, we establish the convergence rate of \cref{Alg:aug_general}, as stated in \cref{Thm:aug_general}.
We remark that \cref{Thm:aug_general} generalizes~\cite[Lemma~2.1]{LWXZ:2007}, which addresses the special case where $F$ is quadratic, i.e., when~\eqref{general} reduces to a linear saddle-point problem.

% Theorem: Augmented Lagrangian method
\begin{theorem}
\label{Thm:aug_general}
In~\eqref{general_dual}, suppose that $F^* \circ (- B^*)$ is $\mu$-strongly convex.
Then, in the augmented Lagrangian method presented in \cref{Alg:aug_general}, we have
\begin{equation}
\label{Thm1:aug_general}
\| p^{(n + 1)} - p \| \leq \frac{\epsilon}{\mu + \epsilon} \| p^{(n)} - p \|,
\quad n \geq 0.
\end{equation}
Moreover, if we further assume that $F$ is differentiable, then we have
\begin{equation}
\label{Thm2:aug_general}
D_F^{\mathrm{sym}} (u^{(n+1)}, u) \leq
\begin{cases}
\displaystyle
\frac{\mu \epsilon^2}{( \mu + \epsilon)^2} \| p^{(n)} - p \|^2, & \text{ if } \epsilon \leq \mu, \\
\displaystyle
\frac{\epsilon}{4} \| p^{(n)} - p \|^2, & \text{ if } \epsilon > \mu,
\end{cases}
\quad n \geq 0.
\end{equation}
\end{theorem}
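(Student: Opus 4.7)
The plan is to split the argument into the two displayed bounds. The first bound, \eqref{Thm1:aug_general}, is essentially free: by \cref{Lem:proximal_point_equiv}, the dual iterates $p^{(n)}$ produced by \cref{Alg:aug_general} coincide with the proximal point iterates $q^{(n)}$ of \cref{Alg:proximal_point} applied to the dual problem \eqref{general_dual}, and the assumed $\mu$-strong convexity of $F^* \circ (-B^*)$ makes \cref{Lem:proximal_point_conv} directly applicable, yielding the contraction with factor $\epsilon/(\mu+\epsilon)$.

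For the second bound, I would first extract two gradient identities from the algorithm. Writing the first-order optimality condition for the $u^{(n+1)}$-subproblem and substituting the dual update $p^{(n+1)} = p^{(n)} + \epsilon^{-1}(Bu^{(n+1)} - g)$ gives $\nabla F(u^{(n+1)}) = -B^* p^{(n+1)}$, while the KKT conditions of the primal problem \eqref{general} give $\nabla F(u) = -B^* p$. Subtracting, pairing with $u^{(n+1)} - u$, and using $Bu = g$ together with the dual update relation $Bu^{(n+1)} - g = \epsilon(p^{(n+1)} - p^{(n)})$ produces the key identity
\begin{equation*}
D_F^{\mathrm{sym}}(u^{(n+1)}, u) = \epsilon \bigl( p - p^{(n+1)},\, p^{(n+1)} - p^{(n)} \bigr).
\end{equation*}

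Next I would introduce the shorthand $a = p^{(n)} - p$ and $b = p^{(n+1)} - p$, which recasts the identity as $D_F^{\mathrm{sym}}(u^{(n+1)}, u) = \epsilon\bigl((b, a) - \|b\|^2\bigr)$. Applying Cauchy--Schwarz and parameterizing $t = \|b\|/\|a\| \in [0,\, \epsilon/(\mu+\epsilon)]$ (the upper endpoint coming from \eqref{Thm1:aug_general}) reduces the problem to maximizing the scalar function $t(1-t)\|a\|^2$ over this interval. Since $t(1-t)$ attains its unconstrained maximum $1/4$ at $t = 1/2$, two cases arise: when $\epsilon \le \mu$ the constraint $t \le \epsilon/(\mu+\epsilon) \le 1/2$ is active and the maximum is $\mu\epsilon/(\mu+\epsilon)^2 \cdot \|a\|^2$, while for $\epsilon > \mu$ the interior maximum $\|a\|^2/4$ is admissible. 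Multiplying by $\epsilon$ gives the two cases of \eqref{Thm2:aug_general} exactly.

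The routine parts are the two gradient identities and the Cauchy--Schwarz step; the main point requiring care is the case split in the scalar maximization, since the breakpoint $\epsilon = \mu$ (equivalently $t = 1/2$) is precisely where the constraint from \eqref{Thm1:aug_general} stops being binding, and one must verify that the upper bound $t \le \epsilon/(\mu+\epsilon)$ is genuinely attainable (so that the stated estimate is sharp in the worst case). No additional structural hypotheses beyond the global strong convexity assumption (and differentiability of $F$ for the second bound) are needed.
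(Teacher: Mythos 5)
Your proposal is correct and follows essentially the same route as the paper: your key identity $D_F^{\mathrm{sym}}(u^{(n+1)},u) = \epsilon\bigl((p^{(n+1)}-p,\,p^{(n)}-p) - \|p^{(n+1)}-p\|^2\bigr)$ is exactly the paper's intermediate step~\eqref{Thm9:aug_general}, the first bound is obtained identically via \cref{Lem:proximal_point_equiv,Lem:proximal_point_conv}, and the case split at $\epsilon=\mu$ is the same. The only cosmetic difference is in the last step, where you use Cauchy--Schwarz followed by maximizing $t(1-t)$ over $t\in[0,\epsilon/(\mu+\epsilon)]$, whereas the paper applies Young's inequality with parameter $\theta$ set to $\epsilon/(\mu+\epsilon)$ or $1/2$; the two computations are equivalent (and your aside about attainability/sharpness is not needed for the stated upper bound).
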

\begin{proof}
The first estimate~\eqref{Thm1:aug_general} follows immediately from \cref{Lem:proximal_point_equiv,Lem:proximal_point_conv}.
To prove~\eqref{Thm2:aug_general}, we first see that the optimality condition for~\eqref{general} reads as
\begin{subequations}
\begin{align}
\label{Thm3:aug_general}
\nabla F(u) + B^* p = 0, \\
\label{Thm4:aug_general}
Bu = g.
\end{align}
\end{subequations}
Moreover, \cref{Alg:aug_general} reads as
\begin{subequations}
\begin{align}
\label{Thm5:aug_general}
\nabla F(u^{(n+1)}) + B^* p^{(n)} + \epsilon^{-1} B^* (B u^{(n+1)} - g) = 0, \\
\label{Thm6:aug_general}
p^{(n+1)} = p^{(n)} + \epsilon^{-1} (B u^{(n+1)} - g).
\end{align}
\end{subequations}
Combining~\eqref{Thm3:aug_general},~\eqref{Thm4:aug_general}, and~\eqref{Thm5:aug_general} yields
\begin{equation}
\label{Thm7:aug_general}
\nabla F(u^{(n+1)}) - \nabla F(u) + B^* (p^{(n)} - p) + \epsilon^{-1} B^* B (u^{(n+1)} - u) = 0.
\end{equation}
In addition, combining~\eqref{Thm4:aug_general} and~\eqref{Thm6:aug_general} yields
\begin{equation}
\label{Thm8:aug_general}
p^{(n+1)} - p^{(n)} = \epsilon^{-1} B (u^{(n+1)} - u).
\end{equation}
It follows that
\begin{equation}
\label{Thm9:aug_general}
\begin{split}
D_F^{\mathrm{sym}} &(u^{(n+1)}, u)
\stackrel{\eqref{symmetrized_Bregman}}{=} (\nabla F(u^{(n+1)}) - \nabla F(u), u^{(n+1)} - u) \\
&\stackrel{\eqref{Thm7:aug_general}}{=} - (B^* (p^{(n)} - p), u^{(n+1)} - u) - \epsilon^{-1} (B^* B (u^{(n+1)} - u), u^{(n+1)} - u) \\
&\stackrel{\eqref{Thm8:aug_general}}{=} - \epsilon \| p^{(n+1)} - p \|^2 + \epsilon (p^{(n+1)} - p, p^{(n)} - p) \\
&\leq   \epsilon\left( \frac{1}{2 \theta} - 1 \right) \| p^{(n+1)} - p \|^2 + \frac{\epsilon \theta}{2} \| p^{(n)} - p \|^2,
\end{split}
\end{equation}
where $\theta$ is any positive real number.
On one hand, if $\epsilon \leq \mu$, setting $\theta = \frac{\epsilon}{\mu + \epsilon} \leq \frac{1}{2}$ in~\eqref{Thm9:aug_general} yields
\begin{equation*}
D_F^{\mathrm{sym}} (u^{(n+1)}, u)
\leq \frac{\mu - \epsilon}{2} \| p^{(n+1)} - p \|^2 + \frac{\epsilon^2}{2 (\mu + \epsilon)} \| p^{(n)} - p \|^2
\stackrel{\eqref{Thm1:aug_general}}{\leq} \frac{\mu \epsilon^2}{( \mu + \epsilon )^2} \| p^{(n)} - p \|^2.
\end{equation*}
On the other hand, if $\epsilon > \mu$, setting $\theta = \frac{1}{2}$ in~\eqref{Thm9:aug_general} yields
\begin{equation*}
D_F^{\mathrm{sym}} (u^{(n+1)}, u) \leq \frac{\epsilon}{4} \| p^{(n)} - p \|^2.
\end{equation*}
This completes the proof.
\end{proof}

% Remark: Why important
\begin{remark}
\label{Rem:ALM_importance}
While the arbitrarily fast convergence of the augmented Lagrangian method stated in \cref{Thm:aug_general} can be viewed as a direct consequence of classical results~\cite{BC:2011,Rockafellar:1976b,Rockafellar:1976}, it has seldom been considered in practice because a very small parameter $\epsilon$ typically leads to nearly semicoercive subproblems that are difficult to solve numerically.
In a recent work~\cite{LP:2025b}, it was shown that introducing a suitable multilevel structure can mitigate this near-semicoercivity; see \cref{App:Nearly}.
Thanks to this result, \cref{Thm:aug_general} can now be considered in practical settings, as discussed in this paper.
\end{remark}

% Appendix: Abstract convergence theory
\section{Abstract theory of subspace correction methods for nearly semicoercive problems}
\label{App:Nearly}
In this appendix, we provide a brief summary of the abstract convergence theory of subspace correction methods for nearly semicoercive convex optimization problems, as developed in~\cite{LP:2025b}.
Although the theory in~\cite{LP:2025b} is formulated in a broader setting, we confine our discussion here to a specific case suitable for our purpose, for the sake of simplicity and clarity.

Let $V$ be a uniformly smooth and uniformly convex Banach space (cf.~\cite{Megginson:1998}) equipped with a norm $\| \cdot \|$ and a continuous seminorm $| \cdot |$.
We consider the following nearly semicoercive model problem:
\begin{equation}
\label{model_abstract}
\min_{v \in V} \left\{ F(v) := F_0(v) + \epsilon F_1(v) \right\},
\end{equation}
where $F_0 \colon V \rightarrow \mathbb{R}$ and $F_1 \colon V \rightarrow \mathbb{R}$ are G\^{a}teaux differentiable and convex functionals, and $\epsilon$ is a small positive parameter, say $\epsilon \in (0, 1/2]$.
We further assume that $F_0$ is semicoercive with respect to $| \cdot |$~\cite[Proposition~2.3]{LP:2025b}, and that $F_1$ is coercive.

Suppose that the space $V$ admits the following space decomposition:
\begin{equation}
\label{space_decomposition_abstract}
V = \sum_{j=1}^J V_j,
\end{equation}
where each $V_j$, $1 \leq j \leq J$, is a closed subspace of $V$.
The strengthened convexity condition~\cite[Equation~(3.4)]{LP:2025b} asserts that there exists a constant $\tau_0 > 0$ such that
\begin{equation}
\label{strengthened_convexity}
(1 - \tau_0) F(v) + \tau_0 \sum_{j=1}^J F(v + w_j) \geq F \left( v + \tau_0 \sum_{j=1}^J w_j \right),
\quad v \in V, \text{ } w_j \in V_j.
\end{equation}

% % Remark: Strengthened convexity
% \begin{remark}
% \label{Rem:strengthened_convexity}
% The condition given in~\cite{LP:2025b,Park:2020,Park:2022a} is stated in a global sense.
% However, by carefully following the convergence analysis in~\cite{LP:2025b}~(see~\cite[Equation~(4.7)]{LP:2025b}), we deduce that we may replace the global condition with the local condition given above.
% \end{remark}

The parallel subspace correction method for solving the problem~\eqref{model_abstract} based on the space decomposition~\eqref{space_decomposition_abstract} is presented in \cref{Alg:PSC}.

% Algorithm: PSC
\begin{algorithm}
\caption{Parallel subspace correction method for solving~\eqref{model_abstract}}
\begin{algorithmic}[]
\label{Alg:PSC}
\STATE Given $\tau > 0$:
\STATE Choose $u^{(0)} \in V$.
\FOR{$n=0,1,2,\dots$}
    \FOR{$j = 1, 2, \dots, J$}
        \STATE $\displaystyle w_j^{(n+1)} = \operatornamewithlimits{\arg\min}_{w_j \in V_j} F(u^{(n)} + w_j)$
    \ENDFOR
    \STATE $\displaystyle
    u^{(n+1)} = u^{(n)} + \tau \sum_{j=1}^J w_j^{(n+1)}
    $
\ENDFOR
\end{algorithmic}
\end{algorithm}

In the following, we present conditions that should be verified to show convergence of \cref{Alg:PSC}.
We define the Bregman divergences $d_0$ and $d_1$ corresponding to $F_0$ and $F_1$, respectively, as in~\eqref{d1}.
Combining~\cite[Proposition~4.9 and Assumption~5.4]{LP:2025b} yields the following local smoothness condition.

% Assumption: Smoothness - Nearly coercive problem
\begin{assumption}[local smoothness]
\label{Ass:smooth}
For any $\| \cdot \|$-bounded and convex subset $K$ of $V$, we have
\begin{equation*}
    \sup_{v, v + w \in K} \frac{d_{0} (w; v)}{\| w \|^2} < \infty
    \quad \text{and} \quad
    \sup_{v, v + w \in K} \frac{d_{1} (w; v)}{\| w \|^2} < \infty.
\end{equation*}
\end{assumption}

The local uniform convexity condition~\cite[Assumption~5.9]{LP:2025b} is presented as follows.

% Assumption: Local uniform convexity
\begin{assumption}[local uniform convexity]
\label{Ass:uniform}
For some $p \geq 2$, we have the following:
\begin{enumerate}[(a)]
\item For any $| \cdot |$-bounded and convex subset $K$ of $V$, we have
\begin{equation*}
    \mu_{0,K} := \inf_{v, v+w \in K} \frac{d_0 (w; v)}{| w |^p} > 0.
\end{equation*}
\item For any $\| \cdot \|$-bounded and convex subset $K$ of $V$, we have
\begin{equation*}
    \mu_{1,K} := \inf_{v, v+w \in K} \frac{(d_0 + d_1) (w; v)}{\| w \|^p} > 0.
\end{equation*}
\end{enumerate} 
\end{assumption}

Under \cref{Ass:smooth,Ass:uniform}, we can ensure convergence of \cref{Alg:PSC}~\cite[Theorem~4.13]{LP:2025b}~(see also~\cite[Theorem~4.8]{Park:2020}).
However, in order to ensure an $\epsilon$-independent convergence rate, we need more conditions.

The following kernel decomposition assumption~\cite[Assumption~5.6]{LP:2025b}, originally introduced in~\cite{LWXZ:2007} for the analysis of linear problems, plays a critical role in the convergence theory.

% Assumption: Kernel decomposition
\begin{assumption}[kernel decomposition]
\label{Ass:kernel}
The kernel $\cN = \ker F_0$ of the semicoercive functional $F_0$ in~\eqref{model_abstract} admits the decomposition
\begin{equation*}
\cN = \sum_{j=1}^J (V_j \cap \cN).
\end{equation*}
\end{assumption}

The next condition, \cref{Ass:triangle}, describes a triangle-inequality-like property stated in~\cite[Assumption~5.7]{LP:2025b}.

% Assumption: Triangle-inequality-like property
\begin{assumption}[triangle inequality-like property]
\label{Ass:triangle}
For any bounded and convex subset $K \subset V$, there exists a constant $C_{K} > 0$ such that
\begin{equation}
\label{C_tri}
d_1(w + w'; v) \leq C_{K} \left( d_1(w; v) + d_1(w'; v) \right),
\quad v \in K, \quad w, w' \in V.
\end{equation}
\end{assumption}

Similar to~\eqref{K_0}, given an initial guess $u^{(0)} \in V$ for \cref{Alg:PSC}, we define the set $K_0$ by
\begin{equation}
\label{K_0_abstract}
K_0 := \{ v \in V : F(v) \leq F(u^{(0)}) \}.
\end{equation}
Note that $K_0$ depends implicitly on $\epsilon$ due to the $\epsilon$-dependence of the energy functional $F$; see~\cite[Remark~5.5]{LP:2025b}.
Under all the assumptions stated above, we obtain the following convergence theorem, which is a direct consequence of~\cite[Theorems~4.13, 5.8, and 5.10]{LP:2025b}.

% Theorem: Convergence theorem for nearly semicoercive
\begin{theorem}
\label{Thm:nearly}
Suppose that \cref{Ass:smooth,Ass:uniform,Ass:kernel,Ass:triangle} hold.
In \cref{Alg:PSC}, assume that $\tau \in (0, \tau_{0}]$, where $\tau_0$ was given in~\eqref{strengthened_convexity}.
Then we have the following:
\begin{enumerate}[(a)]
\item In the case $p = 2$, we have
\begin{equation*}
F(u^{(n)}) - F(u)
\leq \left( 1 - \frac{\tau}{2} \min \left\{ 1, \frac{C_2}{C_1} \right\} \right)^n ( F(u^{(0)}) - F(u) ),
\quad n \geq 0,
\end{equation*}
where $C_1$ is a positive constant independent of $\epsilon$, except for its implicit dependence on $K_0$ (see~\eqref{K_0_abstract}), and $C_2 = \min \{ \mu_{0, K_0}, \mu_{1, K_0} \}$.

\item In the case $p > 2$, there exists $\zeta^* > 0$ such that, if $F(u^{(0)}) - F(u) > \zeta^*$, then
\begin{equation*}
F(u^{(1)}) - F(u) \leq \left( 1 - \frac{\tau}{2} \right) ( F(u^{(0)}) - F(u) ),
\end{equation*}
and otherwise,
\begin{equation*}
F(u^{(n)}) - F(u) \leq \frac{ C_1^{\frac{p}{p-2}} }{ \tau^{\frac{p}{p-2}} C_2^{\frac{2}{p-2}} n^{\frac{p}{p-2}} },
\quad n \geq 1,
\end{equation*}
where $C_1$ and $C_2$ are as defined in~(a).
\end{enumerate}
\end{theorem}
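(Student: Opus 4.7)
The plan is to combine the three standard ingredients of the subspace correction framework: a per-iteration descent bound from the strengthened convexity condition, a stable decomposition that lower-bounds the aggregate of local energy decreases by a power of the global energy gap, and an iteration lemma translating the resulting recursion into the rate stated. Since \cref{Thm:nearly} is invoked from~\cite{LP:2024b}, my task is to outline how these ingredients fit together under \cref{Ass:smooth,Ass:uniform,Ass:kernel,Ass:triangle}.

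First I would derive the one-step descent. By~\eqref{strengthened_convexity} and convexity of $F$, for $\tau \in (0,\tau_0]$,
\begin{equation*}
F(u^{(n+1)}) \leq (1-\tau) F(u^{(n)}) + \tau \sum_{k=1}^{J} F(u^{(n)} + w_k^{(n+1)}),
\end{equation*}
so that $F(u^{(n)}) - F(u^{(n+1)}) \geq \tau \sum_k \bigl( F(u^{(n)}) - F(u^{(n)} + w_k^{(n+1)}) \bigr)$. Each local decrease is controlled from below by a Bregman quantity $d_0(w_k^{(n+1)}; u^{(n)}) + \epsilon\, d_1(w_k^{(n+1)}; u^{(n)})$, up to optimality of $w_k^{(n+1)}$ on $V_k$. \cref{Ass:smooth} ensures that the level set $K_0$ from~\eqref{K_0_abstract} is $\|\cdot\|$-bounded, so that the iterates stay in $K_0$ and all subsequent local constants are legitimate.

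Next I would construct an $\epsilon$-robust stable decomposition of $u^{(n)} - u$. Using \cref{Ass:kernel}, split $u^{(n)} - u = e_{\cN} + e_{\perp}$ with $e_{\cN} \in \cN = \ker F_0$; decompose $e_{\perp}$ via the semicoercive stable decomposition of $F_0$ on~\eqref{space_decomposition_abstract}, and $e_{\cN}$ via the kernel decomposition, yielding $u^{(n)} - u = \sum_k \bar w_k$ with $\bar w_k \in V_k$ and $\sum_k \bigl(|\bar w_k|^p + \|\bar w_k\|^p\bigr) \leq C_1^p \bigl(|e_{\perp}|^p + \|e_{\cN}\|^p\bigr)$, the constant $C_1$ depending on $K_0$ but not on $\epsilon$. \cref{Ass:uniform} then gives $F(u^{(n)}) - F(u) \leq \mu_{0,K_0}^{-1}|e_\perp|^p + (\epsilon \mu_{1,K_0})^{-1}\|e_{\cN}\|^p$ only if care is taken; instead one compares the sum of local Bregman divergences along $\bar w_k$ with the total Bregman divergence using \cref{Ass:triangle}, which absorbs the triangle constant into $C_1$. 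Combining the estimates produces the recursion
\begin{equation*}
F(u^{(n)}) - F(u^{(n+1)}) \geq \frac{\tau\, C_2}{C_1^{\,p}} \bigl( F(u^{(n)}) - F(u) \bigr)^{p/2},
\end{equation*}
with $C_2 = \min\{\mu_{0,K_0}, \mu_{1,K_0}\}$, together with the trivial one-step bound $F(u^{(n)}) - F(u^{(n+1)}) \geq \tfrac{\tau}{2}(F(u^{(n)}) - F(u))$ coming from $\tau \leq \tau_0$ and line-search style arguments.

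Finally, I would feed these into the standard scalar iteration lemma. If $p = 2$, the recursion is linear and direct iteration gives part (a) with rate $1 - \tfrac{\tau}{2}\min\{1, C_2/C_1\}$. If $p > 2$, define $\zeta^* = (C_1^p/C_2)^{2/(p-2)}$: as long as $F(u^{(n)}) - F(u) > \zeta^*$, the linear bound $F(u^{(n+1)}) - F(u) \leq (1-\tau/2)(F(u^{(n)}) - F(u))$ dominates; once below the threshold, the superlinear recursion with exponent $p/2 > 1$ integrates to the sublinear rate $O(n^{-p/(p-2)})$ with the stated constants. The main obstacle, and the reason this theorem requires the machinery of~\cite{LP:2024b} rather than a one-line invocation of earlier subspace correction theory, is producing the stable decomposition whose constant $C_1$ is genuinely independent of $\epsilon$: the naive bound has $C_1 \sim \epsilon^{-1/p}$ from treating the coercive term globally, and one must combine the kernel splitting with \cref{Ass:triangle} to isolate the $\epsilon$-dependence inside $K_0$ alone, which is exactly what~\cite[Theorems~5.8,~5.10]{LP:2024b} accomplish.
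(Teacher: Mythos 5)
The paper offers no proof of \cref{Thm:nearly} beyond citing \cite[Theorems~4.13, 5.8, and 5.10]{LP:2024b}, so the comparison here is against the architecture of that cited argument. Your sketch identifies the right ingredients in the right order (one-step descent from~\eqref{strengthened_convexity}, an $\epsilon$-robust stable decomposition built by splitting off the kernel via \cref{Ass:kernel} and controlling the cross terms with \cref{Ass:triangle}, then a scalar iteration lemma), and your closing remark about why the naive decomposition loses a factor $\epsilon^{-1/p}$ is exactly the point of the kernel-decomposition assumption.

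However, the central recursion you write down is quantitatively wrong and does not imply the stated rate. The bound $\delta_n - \delta_{n+1} \geq \tau C_2 C_1^{-p}\,\delta_n^{p/2}$ (with $\delta_n = F(u^{(n)}) - F(u)$) integrates to $\delta_n = O(n^{-2/(p-2)})$, not the claimed $O(n^{-p/(p-2)})$; for the paper's case $p=3$ that is $O(n^{-2})$ versus $O(n^{-3})$. The correct exponent is $2 - 2/p$, and it arises from a step you have skipped: one compares the local corrections against a \emph{scaled} stable decomposition $t\bar w_k$, $t \in [0,1]$, obtaining $\sum_k \bigl(F(u^{(n)}) - F(u^{(n)}+w_k^{(n+1)})\bigr) \geq t\,\delta_n - C t^2 \sum_k \|\bar w_k\|^2$ via the quadratic upper bound of \cref{Ass:smooth}, then uses \cref{Ass:uniform} in the form $\|u - u^{(n)}\|^p \leq C_2^{-1}\delta_n$ to get $\sum_k\|\bar w_k\|^2 \lesssim C_1 (C_2^{-1}\delta_n)^{2/p}$, and finally optimizes over $t$. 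The interior optimum yields $\delta_n - \delta_{n+1} \gtrsim \tau C_2^{2/p} C_1^{-1} \delta_n^{2-2/p}$, which does produce $n^{-p/(p-2)}$ with the stated constants; the boundary case $t=1$ yields the linear decrease $\delta_n - \delta_{n+1} \geq \tfrac{\tau}{2}\delta_n$ precisely when $\delta_n > \zeta^*$. In particular, the linear-decrease branch is not a ``trivial'' consequence of $\tau \leq \tau_0$ as you assert --- if it were, part (b) would be superseded by global linear convergence --- but the large-gap branch of this same $t$-optimization, which is also where the threshold $\zeta^*$ comes from.
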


\bibliographystyle{siamplain}
\bibliography{refs_Forchheimer}

\end{document}